\theoremstyle{plain}
\newtheorem{thm}{Theorem}
\newtheorem{lemma}[thm]{Lemma}
\theoremstyle{definition}
\theoremstyle{remark}
\newtheorem{rem}{Remark}
\newcommand{\prn}[1]{\left(#1\right)}
\newcommand{\abs}[1]{\left|#1\right|}
\newcommand{\brk}[1]{\left[#1\right]}
\newcommand{\ud}[1]{{\text{d}#1}}
\begin{document}
\parskip1ex

%=============================================================================================
\title[An exact particle method and its application to stiff reaction kinetics]
{An exact particle method for scalar conservation laws and its application
to stiff reaction kinetics}
\author{Yossi Farjoun}
\address[Yossi Farjoun]
{G.\ Mill\'an Institute of Fluid Dynamics\\
Nanoscience and Industrial Mathematics\\
University Carlos III de Madrid\\
Av.\ Universidad 30, 28911 Legan\'es\\
Spain}
\email{yfarjoun@ing.uc3m.es}
\author{Benjamin Seibold}
\address[Benjamin Seibold]
{Department of Mathematics \\ Temple University \\
1801 North Broad Street \\ Philadelphia, PA 19122}
\email{seibold@temple.edu}
\urladdr{http://www.math.temple.edu/\~{}seibold}
\subjclass[2000]{65M25; 35L65}
\keywords{particle, characteristic, shock, reaction kinetics}
\date{\today}
%=============================================================================================
\begin{abstract}
An ``exact'' method for scalar one-dimensional hyperbolic conservation laws is presented. The approach is based on the evolution of shock particles, separated by local similarity solutions. The numerical solution is defined everywhere, and is as accurate as the applied ODE solver. Furthermore, the method is extended to stiff balance laws. A special correction approach yields a method that evolves detonation waves at correct velocities, without resolving their internal dynamics. The particle approach is compared to a classical finite volume method in terms of numerical accuracy, both for conservation laws and for an application in reaction kinetics.
\end{abstract}
%=============================================================================================

\maketitle

%=============================================================================================
\section{Introduction}
%=============================================================================================
In this paper, a special class of numerical methods for scalar hyperbolic conservation laws in one space dimension is presented. An important area in which such problems arise is the simulation of nonlinear flows in networks. An example is the flow of vehicular traffic on highways \cite{HertyKlar2003}. The flow along each network edge is described by a hyperbolic conservation law (e.g.~the Lighthill-Whitham model \cite{LighthillWhitham1955} for traffic flow). The edges meet at the network vertices, where problem specific coupling conditions are imposed (such as the Coclite-Piccoli conditions \cite{HoldenRisebro1995, CocliteGaravelloPiccoli2005} for traffic flow). Here, we focus on the evolution of the flow along a single edge. In network flows, high requirements are imposed on the numerical method. On the one hand, the approach must guarantee exact conservation (no cars must be lost), no spurious oscillations must occur (otherwise one may encounter negative densities), and shocks (traffic jams) should be located accurately. On the other hand, in the simulation of a large network, only very few computational resources can be attributed to each edge.

A commonly used approach is to approximate the governing conservation law by traditional finite difference \cite{LaxWendroff1960} or finite volume methods \cite{Godunov1959}. Low order methods are generally too diffusive, thus do not admit an accurate location of shocks. High order methods, such as finite volume methods with limiters \cite{VanLeer1974}, or ENO \cite{HartenEngquistOsherChakravarthy1987}/WENO \cite{LiuOsherChan1994} schemes admit more accurate capturing of shocks. However, this comes at the expense of locality: stencils reach over multiple cells, which poses challenges at the network vertices. Alternative approaches are front tracking methods \cite{HoldenHoldenHeghKrohn1988}. These do not operate on a fixed grid, but track shocks explicitly. Thus, shock are located accurately. However, smooth parts, such as rarefaction fans, are not represented very well. Another class of approaches is based on the underlying method of characteristics. An example is the CIR method \cite{CourantIsaacsonRees1952}, which updates information on grid points by tracing characteristic curves. Thus, it is a fixed-grid finite difference method. A fully characteristic approach was presented by Bukiet et al. \cite{BukietPeleskoLiSachdev1996} that tracks the evolution of particles. Where the solution is smooth, particles follow the characteristic curves, and where these curves collide, shocks are evolved. By construction, shocks are ideally sharp. While the tracing of the characteristics is high order accurate, the location of shocks is only first order accurate.

Another approach was presented by the authors \cite{FarjounSeibold2009_1, FarjounSeibold2009_2, FarjounSeibold2009_3}. In contrast to previous methods, here shocks are resolved by the merging of characteristic particles. This is made possible by the definition of a suitable interpolation, which is a similarity solution of the underlying conservation law. Hence, in \cite{FarjounSeibold2009_3} we suggest to call the approach \emph{rarefaction tracking}. The method admits second order accurate location of shocks. In Sect.~\ref{sec:characteristic_particles_interpolation}, the fundamentals of this approach, in particular the similarity interpolation, are outlined.

A generalization of the characteristic particle method presented in \cite{FarjounSeibold2009_2}, namely \emph{shock particles}, is introduced in Sect.~\ref{sec:shock_particles}. A shock particle is a moving discontinuity that carries two function values. If the jump height is zero, a classical characteristic particle is recovered. Even though shocks are evolved explicitly, the aforementioned similarity interpolation still plays a crucial role, thus the approach is fundamentally different from traditional shock/front tracking methods. In fact, the presented approach solves the considered hyperbolic conservation law exactly, up to the integration error of an ODE. Hence, we call it an \emph{exact particle method}. In Sect.~\ref{sec:exact_particle_method}, the evolution and interaction of shock particles is shown to give rise to an actual computational method. The key idea is that the original partial differential equation is reduced to an ordinary differential equation, which then can be solved using a high order ODE solver. A comparison of the exact particle approach with a traditional finite volume method (using the package CLAWPACK \cite{Clawpack}) is presented in Sect.~\ref{sec:numerical_error_analysis_particle_method}.

The presented approach is highly accurate for hyperbolic conservation laws, and is quite amenable to extension to balance laws. Of particular interest here are stiff reaction kinetics, in which reactions happen on a faster time scale than the nonlinear advection. In Sect.~\ref{sec:reaction_kinetics}, the problem is introduced and some properties of its solution are described. In Sect.~\ref{sec:particl_method_stiff_reaction_kinetics}, a specialized adaptation of the particle method is presented. It is based on the exact particle method introduced in Sect.~\ref{sec:exact_particle_method}, with a fundamental adaptation that uses the similarity interpolation to provide a certain level of subgrid resolution near the detonation wave. This method is able to track detonation waves correctly, without resolving them explicitly. Computational results for this application are presented in Sect.~\ref{sec:numerical_results_reaction_kinetics}.

%=============================================================================================
\section{Characteristic Particles and Similarity Solution Interpolant}
\label{sec:characteristic_particles_interpolation}
%=============================================================================================
Consider a scalar conservation law in one space dimension
\begin{equation}
u_t+\prn{f(u)}_x = 0\;,
\quad u(x,0) = u_0(x)\;.
\label{eq:conservation_law}
\end{equation}
The flux function $f$ is assumed to be twice differentiable and either convex ($f''>0$) or concave ($f''<0$) on the range of function values. We consider an approximation to the true solution of \eqref{eq:conservation_law} by a family of functions, defined as follows.

Consider a finite number of particles. A particle is a computational node that carries a (variable) position $x_i$, and a (variable) function value $u_i$. Let the set of particles be defined by $P = \{(x_1,u_1),\dots,(x_n,u_n)\}$ with $x_1\le\dots\le x_n$. On the interval $[x_1,x_n]$, we define the interpolant $U_P(x)$ piecewise on the intervals between neighboring particles, as follows. If $u_i = u_{i+1}$, then on the interpolant on $[x_i,x_{i+1}]$ is constant $U_P(x) = u_i$. Otherwise the interpolant on $[x_i,x_{i+1}]$ satisfies
\begin{equation}
\frac{x-x_i}{x_{i+1}-x_i} = \frac{f'(U_P(x))-f'(u_i)}{f'(u_{i+1})-f'(u_i)}\;.
\label{eq:interpolation}
\end{equation}
This defines the inverse interpolant $x(U_P)$ explicitly on $[x_i,x_{i+1}]$. Since $f$ is convex or concave, the interpolant $U_P(x)$ itself is uniquely defined. As shown in \cite{FarjounSeibold2009_2}, the interpolation $U_P$ defined above is an analytical solution of the conservation law \eqref{eq:conservation_law}, in the following sense.
Consider particles moving ``sideways'' according to $P(t) = \{(x_1+f'(u_1)t,u_1),\dots,(x_n+f'(u_n)t,u_n)\}$. If at time $t = 0$, the particles satisfy $x_1<\dots<x_n$, then for sufficiently short times $t>0$, the interpolant $U_{P(t)}$, defined by \eqref{eq:interpolation} is \emph{the} analytical solution to the conservation law \eqref{eq:conservation_law} at time $t$, starting with initial conditions $u_0 = U_P(0)$. This follows from the fact that each point $(x(t),u(t))$ on the solution moves according to the characteristic equations of \eqref{eq:conservation_law}, which are
\begin{equation}
\left\{
\begin{aligned}
\dot x &= f'(u) \\
\dot u &= 0\;.
\end{aligned}
\right.
\label{eq:characteristic_equations}
\end{equation}
From the definition of $P(t)$ it is obvious that the particles satisfy \eqref{eq:characteristic_equations}. Any other point is given by the above defined interpolation. Replacing $x_i$ by $x_i+f'(u_i)t$ in \eqref{eq:interpolation} and differentiating with respect to $t$ yields that $\dot x(t) = f'(U_P(x))$.

The solution between neighboring particles is a similarity solution that either comes from a discontinuity (if the particles depart) or becomes a shock (if the particles approach each other). Hence, the solution $U_P(x)$ is composed of rarefaction waves and compression waves. Therefore, as described in \cite{FarjounSeibold2009_3}, the approach can be interpreted as ``rarefaction tracking'', which expresses its similarity and fundamental difference to front tracking approaches \cite{HoldenHoldenHeghKrohn1988,HoldenRisebro2002}.

\begin{figure}
\centering
\begin{minipage}[t]{.7\textwidth}
\includegraphics[width=\textwidth]{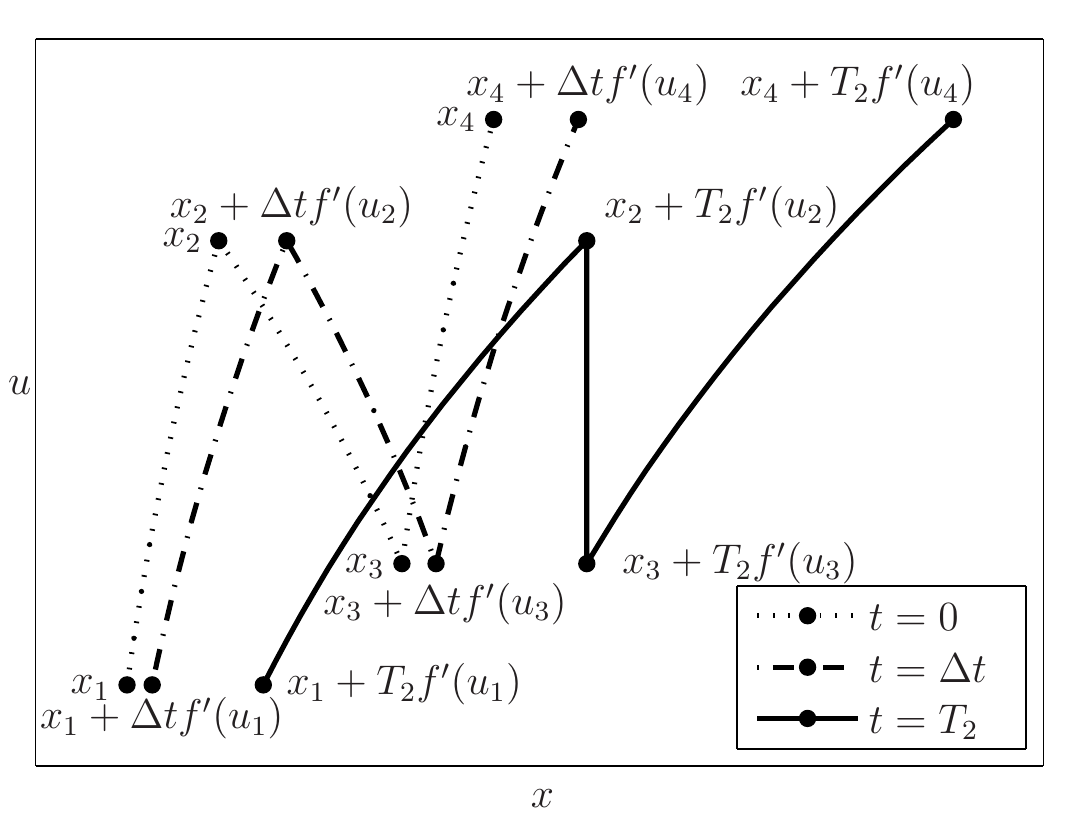}
\end{minipage}
\caption{An illustration of characteristic particles moving according to \eqref{eq:characteristic_equations}. The solution develops a shock at $t = T_2$. The dotted line is the interpolation between the particles at time $t$, the dash-dotted line--after a short time $\Delta t$. The solid line shows the solution at $t = T_2$ when the solution develops a shock between particles 2 and 3.}
\label{fig:characteristic_particles}
\end{figure}

The interpolant $U_P$ \eqref{eq:interpolation} is a solution of \eqref{eq:conservation_law} until the time of the first collision, i.e.~the moment when two neighboring particles share the same $x$-position. For a pair of neighboring particles $(x_i,u_i)$ and $(x_{i+1},u_{i+1})$, the time of collision is given by
\begin{equation}
T_i = -\frac{x_{i+1}-x_i}{f'(u_{i+1})-f'(u_i)}\;.
\label{eq:collision_time}
\end{equation}
If particles depart from each other (i.e.~$f'(u_i)<f'(u_{i+1})$), one has $T_i<0$, thus no collision happens in future time. For a set of $n$ particles, the first time of collision is $T^* = \min\prn{\{T_i:T_i>0\}\cup\infty}$. The solution is continuous until that time. At $t = T^*$, a shock occurs (at $x_i$, between $u_{i+1}$ and $u_i$), and the method of characteristics alone does not yield a correct solution further in time. An illustration of the particle movement and the development of a shock can be seen in Fig.~\ref{fig:characteristic_particles}.
\begin{rem}
We assume that one is interested in single-valued weak entropy-solutions, which possess shocks. In some applications multi-valued solutions are sought, and those can be obtained easily by continuing to move the particles according to the characteristic equations \eqref{eq:characteristic_equations} beyond the occurrence of shocks.
\end{rem}
As presented in previous papers \cite{FarjounSeibold2009_1,FarjounSeibold2009_2,FarjounSeibold2009_3}, the use of the method of characteristics even in the presence of shocks can be made possible by a suitably designed particle management. Particles that collide are immediately merged into a single particle, with a new function value $u$ chosen such that the total area under the interpolant $U_P$ is preserved. After this merge, the defined interpolant is again continuous, and one can step further in time using solely characteristic particle movement \eqref{eq:characteristic_equations}. This approach introduces a small error around shocks. Suitable insertion of new particles near shocks (before merging) guarantees that the error remains localized near shocks.

%=============================================================================================
\section{Shock Particles}
\label{sec:shock_particles}
%=============================================================================================
In this paper, we present an approach that does not introduce any errors intrinsically. While the approximation of general initial conditions by finitely many particles involves an error (see Sect.~\ref{subsec:approximation_initial_conditions}), the actual evolution under the conservation law \eqref{eq:conservation_law} is exact---not just pointwise on the particles, but in the sense of functions. The new approach generalizes characteristic particles to \emph{shock particles}.

%---------------------------------------------------------------------------------------------
\subsection{Evolution of Shock Particles}
%---------------------------------------------------------------------------------------------
A shock particle is a computational node that carries a (variable) position $x_i$, a (variable) left state $u_i^-$, and a (variable) right state $u_i^+$, which satisfy the Oleinik entropy condition \cite{Evans1998} $f'(u_i^-)\ge f'(u_i^+)$). Whenever a shock particle $(x_i,u_i^-,u_i^+)$ violates this conditions (e.g.~because it is placed in the initial conditions), it is immediately replaced by two particles $(x_i,u_i^-,u_i^-)$ and $(x_i,u_i^+,u_i^+)$, which then depart from each other (since $f'(u_i^-)<f'(u_i^+)$). Given $n$ shock particles $P = \{(x_1,u_1^-,u_1^+),\dots,(x_n,u_n^-,u_n^+)\}$, the interpolant $U_P$ on $[x_1,x_n]$ is defined piecewise: on $[x_i,x_{i+1}]$, it satisfies
\begin{equation}
\frac{x-x_i}{x_{i+1}-x_i} = \frac{f'(U_P(x))-f'(u_i^+)}{f'(u_{i+1}^-)-f'(u_i^+)}\;.
\label{eq:interpolation_shock}
\end{equation}
The velocity of a shock particle is given by the Rankine-Hugoniot condition \cite{Evans1998}
as $\dot x_i = s(u_i^-,u_i^+)$, where
\begin{equation}
s(u,v) = \begin{cases}
\frac{f(u)-f(v)}{u-v} & u\neq v \\
\quad f'(u)           & u=v
\end{cases}
\label{eq:difference_quotient}
\end{equation}
is the difference quotient of $f$, continuously extended at
$u=v$.

\begin{rem} When implementing this function numerically, one should avoid
calculating the difference quotient when the distance $\abs{u-v}$ is
very small. For those cases one should consider the limiting value
$f'(u)$ as a more accurate alternative, or even better, the next order
Taylor expansion,
\begin{equation}
s(u,u+\epsilon) \approx f'(u) + \tfrac12\epsilon f''(u),
\end{equation}
can be used.
\end{rem}

At a shock, the function values $u_i^-$ and $u_i^+$ change in time as well. Their rate of change is exactly such that the interpolation \eqref{eq:interpolation_shock} near the shock evolves as a smooth function should evolve under \eqref{eq:conservation_law}. Here we derive the evolution for the right value of the shock $u_i^+$. The argument for $u_i^-$ works analogously. If we had $u_i^- = u_i^+$, the particle would have a function value that is constant in time $\dot u_i^+=0$, and move with velocity $\dot x_i = f'(u_i^+)$. The interpolant \eqref{eq:interpolation_shock} with these definitions for $\dot x_i$ and $\dot u_i^+$ is the correct solution between $x_i$ and $x_{i+1}$. If $u_i^- \neq u_i^+$, the shock moves at a speed different from $f'(u_i^+)$. In order to preserve the same \emph{interpolation}, the function value $u_i^+$ has to evolve according to
\begin{equation}
\dot u_i^+ = (\dot x_i-f'(u_i^+))\frac{f'(u_{i+1}^-)-f'(u_i^+)}{x_{i+1}-x_i}
\frac{1}{f''(u_i^+)}\;.
\end{equation}
Here $\dot x_i-f'(u_i^+)$ is the relative velocity of the shock to a characteristic particle velocity, and $\frac{f'(u_{i+1}^-)-f'(u_i^+)}{x_{i+1}-x_i}\frac{1}{f''(u_i^+)}$ is the slope of the interpolant \eqref{eq:interpolation_shock} at $x_i$, found by differentiating \eqref{eq:interpolation_shock} with respect to $x$.
The law of motion for a shock particle is thus
\begin{equation}
\left\{
\begin{aligned}
\dot x_i   &= s(u_i^-,u_i^+) \\
\dot u_i^- &= \prn{s(u_i^-,u_i^+)-f'(u_i^-)}
\frac{f'(u_{i-1}^+)-f'(u_i^-)}{x_{i-1}-x_i}\frac{1}{f''(u_i^-)} \\
\dot u_i^+ &= \prn{s(u_i^-,u_i^+)-f'(u_i^+)}
\frac{f'(u_{i+1}^-)-f'(u_i^+)}{x_{i+1}-x_i}\frac{1}{f''(u_i^+)}
\end{aligned}
\right.
\label{eq:motion_shock_particle}
\end{equation}
Observe that the evolution of a shock particle depends on the neighboring two particles. See Fig.~\ref{fig:shock_particles} for an illustration of the derivation of these equations.

In the case $u_i^- = u_i^+$, we call a shock particle \emph{characteristic}. In fact, a characteristic particle, as described in Sect.~\ref{sec:characteristic_particles_interpolation}, is nothing else than a shock particle with jump height zero. This is motivated by
\begin{lemma}
The motion of a shock particle \eqref{eq:motion_shock_particle} reduces to the motion of a characteristic particle \eqref{eq:characteristic_equations}, as $u_i^+ - u_i^- \to 0$.
\end{lemma}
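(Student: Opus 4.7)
The claim factors into two parts: showing $\dot x_i \to f'(u_i)$ and showing $\dot u_i^\pm \to 0$ as $u_i^+ - u_i^- \to 0$ (with both approaching a common value $u_i$). The plan is to handle each separately, with the bulk of the work being a Taylor expansion argument for the latter.

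First I would address $\dot x_i$. By construction, the difference quotient $s(u,v)$ in \eqref{eq:difference_quotient} is defined as the continuous extension of $(f(u)-f(v))/(u-v)$ at $u = v$, taking the value $f'(u)$. Hence $\dot x_i = s(u_i^-,u_i^+) \to s(u_i,u_i) = f'(u_i)$ immediately. No work is required beyond invoking the definition.

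The substantive step is the evolution of $u_i^\pm$. I would use the Taylor expansion already recorded in the preceding remark, namely $s(u,u+\epsilon) = f'(u) + \tfrac12 \epsilon f''(u) + O(\epsilon^2)$. Applied with $u = u_i^+$ and $\epsilon = u_i^- - u_i^+$ (using the symmetry $s(a,b)=s(b,a)$), this gives
\begin{equation*}
s(u_i^-,u_i^+) - f'(u_i^+) = \tfrac12(u_i^- - u_i^+)\,f''(u_i^+) + O\bigl((u_i^- - u_i^+)^2\bigr).
\end{equation*}
Substituting into the expression for $\dot u_i^+$ in \eqref{eq:motion_shock_particle}, the factor $f''(u_i^+)$ cancels against the $1/f''(u_i^+)$ already present (this is where strict convexity/concavity of $f$, which ensures $f''$ does not vanish, is used), leaving
\begin{equation*}
\dot u_i^+ = \tfrac12(u_i^- - u_i^+)\,\frac{f'(u_{i+1}^-) - f'(u_i^+)}{x_{i+1} - x_i} + O\bigl((u_i^- - u_i^+)^2\bigr).
\end{equation*}
Since the neighbor quotient remains bounded as long as $x_{i+1} \neq x_i$, this tends to $0$ as $u_i^+ - u_i^- \to 0$. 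An identical calculation applied with the left neighbor yields $\dot u_i^- \to 0$.

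Combining the two limits, the motion \eqref{eq:motion_shock_particle} collapses to $\dot x_i = f'(u_i)$, $\dot u_i^\pm = 0$, which is precisely the characteristic system \eqref{eq:characteristic_equations}. The only place one must be slightly careful is the cancellation of $f''$: it requires that $f''(u_i^\pm)$ is bounded away from zero uniformly in the limit, which follows from the standing convexity/concavity hypothesis on $f$. Beyond that, the argument is a direct substitution of the Taylor expansion into the already-derived equations of motion, and no genuine obstacle arises.
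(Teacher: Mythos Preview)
Your proof is correct and follows essentially the same approach as the paper: both show that $\dot x_i \to f'(u_i)$ by the definition of $s$, and that $\dot u_i^\pm \to 0$ because the factor $s(u_i^-,u_i^+)-f'(u_i^\pm)$ in parentheses vanishes while the remaining quotient stays bounded. The only difference is cosmetic---you make the vanishing of the parenthetical term quantitative via a Taylor expansion (and then cancel the $f''$), whereas the paper simply invokes continuity of $s$; the extra precision is not needed for the lemma but does no harm.
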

\begin{proof}
By definition \eqref{eq:difference_quotient}, the first equation in \eqref{eq:motion_shock_particle} clearly reduces to $\dot x_i= f'(u_i)$. In the second and third equation, the fraction remains finite while the quantity in the parentheses converges to zero, and thus the whole expression vanishes $\dot u_i^-\to 0$, and $\dot u_i^+\to 0$.
\end{proof}
\begin{rem}
In a numerical implementation, the right hand side \eqref{eq:motion_shock_particle} can almost be implemented as it stands, with the only modification that the difference quotient \eqref{eq:difference_quotient} is replaced by the characteristic speed if $\abs{u_i^+ - u_i^-}$ is less than a sufficiently small value.
\end{rem}
\begin{thm}
\label{thm:exact_solution}
If the time evolution \eqref{eq:motion_shock_particle} of the particles $P(t)$ is solved exactly, then for sufficiently short times $t$, the resulting evolution of the interpolation $U_{P(t)}$ is the unique weak entropy solution of the conservation law \eqref{eq:conservation_law} with initial conditions $u_0 = U_{P(0)}$, on the domain of definition $[x_0(t),x_n(t)]$.
\end{thm}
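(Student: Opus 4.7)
The strategy is to verify that $U_{P(t)}$ satisfies the three defining properties of a weak entropy solution on $[x_0(t), x_n(t)]$: (i) it solves \eqref{eq:conservation_law} classically on each open sub-interval $(x_i(t), x_{i+1}(t))$; (ii) the Rankine-Hugoniot jump condition holds at every $x_i(t)$; and (iii) the Oleinik entropy condition is satisfied at every shock. Once (i)--(iii) are in place, Kruzkov's $L^1$-contraction theorem identifies $U_{P(t)}$ as \emph{the} unique weak entropy solution with initial data $U_{P(0)}$, so the theorem follows. The ``sufficiently short times'' qualifier amounts to requiring that no two particles collide (cf.~\eqref{eq:collision_time}), so that the $x_i(t)$ remain distinct and the interpolant stays well-defined.

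The heart of the argument is (i). The cleanest route is to rewrite \eqref{eq:interpolation_shock} in the equivalent form
\begin{equation*}
f'(U_P(x,t)) = \frac{(x_{i+1}-x)\,f'(u_i^+) + (x-x_i)\,f'(u_{i+1}^-)}{x_{i+1}-x_i},
\end{equation*}
which exhibits $f' \circ U_P$ as the \emph{linear} interpolant in $x$ between the two endpoint values. A classical solution on $(x_i, x_{i+1})$ amounts to the assertion that along any characteristic (fixed $U_P$) the position satisfies $\dot x = f'(U_P)$. I would differentiate the displayed relation implicitly in $t$ at fixed $U_P$, substitute the expressions for $\dot u_i^+$ and $\dot u_{i+1}^-$ given by \eqref{eq:motion_shock_particle}, and check that all terms proportional to $\dot x_i$ and $\dot x_{i+1}$ cancel, leaving precisely $\dot x = f'(U_P)$. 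This cancellation is the designed purpose of the factor $(\dot x_i - f'(u_i^+))$ in the third line of \eqref{eq:motion_shock_particle}: the non-characteristic motion of the shock endpoint is exactly compensated by the rate of change of $u_i^+$ so as to preserve the characteristic structure of the interpolant.

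Claim (ii) is immediate from the first line of \eqref{eq:motion_shock_particle} together with \eqref{eq:difference_quotient}. For (iii), the Oleinik condition holds at $t = 0$ by the pre-processing described just after \eqref{eq:interpolation_shock}; since the right-hand side of \eqref{eq:motion_shock_particle} depends continuously on the state, the strict inequality $f'(u_i^-) > f'(u_i^+)$ persists on a short time interval, while the degenerate case $u_i^- = u_i^+$ reduces by the preceding Lemma to pure characteristic motion, where no entropy question arises. The main technical obstacle is the algebraic bookkeeping in (i); the linearity-in-$f'$ reformulation is what makes it tractable, and a pleasant side observation is that the verification of (i) actually uses only the two $\dot u$ equations—not the specific Rankine-Hugoniot form of $\dot x_i$—so (i) and (ii) are independent facts that together pin down the unique entropy-admissible evolution.
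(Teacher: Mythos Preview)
Your proposal is correct and follows essentially the same approach as the paper: verify Rankine--Hugoniot at each $x_i$, Oleinik by the preprocessing rule, and the characteristic equations on each open subinterval. The paper's proof is a two-sentence sketch of exactly these three points; your version supplies the concrete computational route for (i) via the linear-in-$f'$ reformulation and implicit differentiation, which is precisely the calculation the paper alludes to in the paragraph deriving \eqref{eq:motion_shock_particle} but does not repeat in the proof itself.
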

\begin{proof}
Due to the first equation in \eqref{eq:motion_shock_particle}, all shocks (including those of height zero) move at their correct speeds. By construction, every shock satisfies the entropy condition. Discontinuities that violate the entropy condition immediately become rarefaction waves. The second and third equation in \eqref{eq:motion_shock_particle} ensure that each point on the interpolation between particles moves according to the characteristic equations \eqref{eq:characteristic_equations}.
\end{proof}

\begin{figure}
\centering
\begin{minipage}[t]{.9\textwidth}
\includegraphics[width=\textwidth]{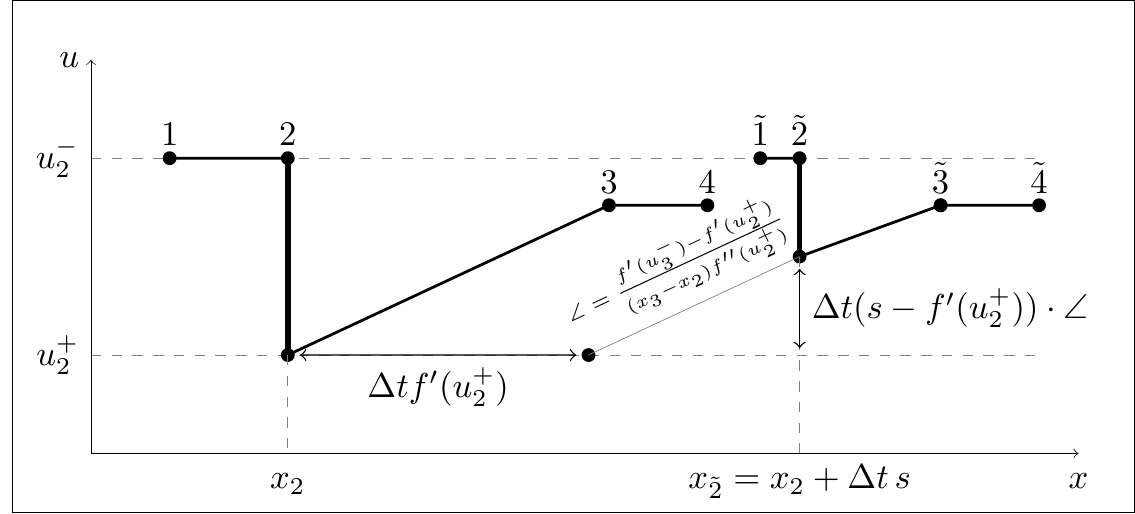}
\end{minipage}
\caption{The shock moves with speed $s$, given by the Rankine-Hugoniot
  condition, which is different than the characteristic speed. 
Thus shock particles must have varying function values.
 In the figure $ \tilde{\cdot} $ denotes moved particles, and
 $\angle$ is the slope of the similarity solution to the left of
 particle 2.}
\label{fig:shock_particles}
\end{figure}

%---------------------------------------------------------------------------------------------
\subsection{Interaction of Shock Particles}
%---------------------------------------------------------------------------------------------
After some time, neighboring shock particles may collide, i.e.~share the same $x$-position. In this situation, the two shocks become a single shock, as the following Lemma shows.
\begin{lemma}
Two neighboring shock particles $(x_i,u_i^-,u_i^+)$ and $(x_{i+1},u_{i+1}^-,u_{i+1}^+)$ satisfy $u_i^+ = u_{i+1}^-$ at their time of collision, if at least one of them is not characteristic.
\label{lem:collisions}
\end{lemma}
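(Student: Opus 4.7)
The plan is to introduce the auxiliary quantities
$L(t) := x_{i+1}(t) - x_i(t)$ and $\Delta(t) := f'(u_{i+1}^-(t)) - f'(u_i^+(t))$,
together with their ratio $r(t) := \Delta(t)/L(t)$, which is (up to a factor of $f''$) the slope of the similarity interpolant \eqref{eq:interpolation_shock} between the two particles. The central observation I want to extract is that $r$ obeys a scalar autonomous ODE, so that $\Delta = r L$ is forced to vanish at the collision time whenever $r$ stays bounded.

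First I would differentiate $L$ and $\Delta$ along the motion \eqref{eq:motion_shock_particle}. Subtracting the $\dot x$ equations gives $\dot L = s(u_{i+1}^-,u_{i+1}^+) - s(u_i^-,u_i^+)$. Plugging the two evolution laws for $u_i^+$ and $u_{i+1}^-$ into $\dot\Delta = f''(u_{i+1}^-)\,\dot u_{i+1}^- - f''(u_i^+)\,\dot u_i^+$ produces clean cancellations: the two $f''$ factors disappear and one is left with $\dot\Delta = (\Delta/L)(\dot L - \Delta)$. A short manipulation then yields
\begin{equation*}
\dot r = -r^2, \qquad \text{with explicit solution} \quad r(t) = \frac{r_0}{1+r_0 t}.
\end{equation*}
Consequently $r$ remains finite for all $t$ except possibly at the singular time $t^* := -1/r_0$, which exists only in the compression regime $r_0 < 0$.

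Let $T$ be the time of collision, so $L(T)=0$. If $T \neq t^*$, the explicit formula gives $r(T)$ finite, whence $\Delta(T) = r(T)\,L(T) = 0$; since $f$ is strictly convex or concave, $f'$ is strictly monotone on the range of values and this forces $u_i^+(T) = u_{i+1}^-(T)$, which is the desired conclusion.

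The case where I expect real work, and where the ``at least one not characteristic'' hypothesis is indispensable, is $T = t^*$, at which $r \to -\infty$. Here I would argue by contradiction. Boundedness of the function values (the entropy solution stays in the range of the initial data) forces $\Delta$ bounded, and with $r \sim -1/(t^*-t)$ this gives $L \sim C(t^*-t)^\alpha$ with $\alpha \geq 1$. The value $\alpha = 1$ would yield $\dot L$ with a finite limit; but assuming, without loss of generality, that particle $i$ is not characteristic, the factor $(s(u_i^-,u_i^+)-f'(u_i^+))/f''(u_i^+)$ appearing in the $\dot u_i^+$ equation of \eqref{eq:motion_shock_particle} has a nonzero limit, so $\dot u_i^+ \sim K/(t^*-t)$ with $K \neq 0$, whose integral diverges logarithmically---contradicting boundedness of $u_i^+$. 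Hence $\alpha > 1$ and $\Delta = r L \to 0$, giving $u_i^+(T) = u_{i+1}^-(T)$ in this case as well. In the fully characteristic case the factor $s-f'$ vanishes identically, the contradiction argument collapses, and $\Delta$ can be checked to remain constant, showing the hypothesis is sharp.
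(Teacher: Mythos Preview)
Your argument is correct and takes a genuinely different route from the paper. The paper works directly with $D:=u_{i+1}^--u_i^+$, writes $\dot D$ via \eqref{eq:motion_shock_particle}, uses the Mean Value Theorem to factor out $D/L$, and observes from the Oleinik condition that the remaining coefficient has a definite sign (strict under the hypothesis); it then argues by contradiction that $D$ cannot stay away from zero as $L\to 0$. Your approach instead tracks $\Delta=f'(u_{i+1}^-)-f'(u_i^+)$ and discovers the clean autonomous law $\dot r=-r^2$ for $r=\Delta/L$, which the paper does not isolate. This buys you an explicit, universal formula $r(t)=r_0/(1+r_0t)$, from which the case $T\neq t^*$ is immediate and one also sees at once that $T\le t^*$; the paper's sign argument is shorter but gives no such structural information. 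In the borderline case $T=t^*$ both proofs are equally informal: your power-law ansatz $L\sim C(t^*-t)^\alpha$ is not really justified (all you have is $L=\Delta/r=-\Delta\,(t^*-t)$, hence $L=O(t^*-t)$ from boundedness of $\Delta$), but it is also unnecessary---the dichotomy ``either $\Delta\to 0$ or $\liminf|\Delta|>0$'' already drives your logarithmic-divergence contradiction, and this matches the level of detail in the paper's ``clear contradiction.'' Your closing remark that in the fully characteristic case $\Delta$ stays constant (so the hypothesis is sharp) is a nice observation the paper omits.
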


\begin{proof}
Due to \eqref{eq:motion_shock_particle}, the difference in function values between the two shocks evolves according to
\begin{align}
\tfrac{d}{dt}\prn{u_{i+1}^- -u_i^+} &=
\prn{\tfrac{s(u_{i+1}^-,u_{i+1}^+)-f'(u_{i+1}^-)}{f''(u_{i+1}^-)}
+\tfrac{f'(u_i^+)-s(u_i^-,u_i^+)}{f''(u_i^+)}}
\prn{f'(u_{i+1}^-)-f'(u_i^+)}\tfrac{1}{x_{i+1}-x_i}\;\notag\\
&=\prn{\tfrac{s(u_{i+1}^-,u_{i+1}^+)-f'(u_{i+1}^-)}{f''(u_{i+1}^-)}
+\tfrac{f'(u_i^+)-s(u_i^-,u_i^+)}{f''(u_i^+)}}
f''(\xi)\frac{u^-_{i+1}-u^+_{i}}{x_{i+1}-x_i}\;.
\end{align}
Here $\xi$ is a value between $u^-_{i+1}-u^+_{i}$ given by the Mean
Value Theorem.
Due to the Oleinik entropy condition, both numerators inside the large
parentheses are non-positive, and by assumption at least one is
strictly negative.
The signs of $f''$ inside and outside the large parentheses cancel
each other out. 
Hence the right hand side always has the opposite sign than $u_{i+1}^-
-u_i^+$. 
If we assume, by negation, that $u_{i+1}^- -u_i^+$ remains finite as
$x_{i+1}-x_i\to 0$ we get a clear contradiction, and thus the difference $u_{i+1}^- -u_i^+$ goes to zero.
\end{proof}

In the computational method, two shock particles $(x_i,u_i^-,u_i^+)$ and $(x_{i+1},u_{i+1}^-,u_{i+1}^+)$
that collide $x_i = x_{i+1}$ are simply merged into a single particle $(x_i,u_i^-,u_{i+1}^+)$. Due to Lemma~\ref{lem:collisions}, this merge does not change the actual solution. Hence, Thm.~\ref{thm:exact_solution} extends to allow particle merges, assuming that the time evolution \eqref{eq:motion_shock_particle} is integrated exactly. If both interacting particles are characteristic, this approach automatically creates a shock.

%=============================================================================================
\section{An ``Exact'' ODE Based Method}
\label{sec:exact_particle_method}
%=============================================================================================
Due to Thm.~\ref{thm:exact_solution}, the presented approach yields the exact weak entropy solution of the conservation law \eqref{eq:conservation_law}, when starting with an initial condition $u_0$ that can be represented by finitely many particles $P_0$, i.e.~$u_0 = U_{P_0}$. Hence, we call this approach an \emph{exact particle method}. In practice, two types of approximation are performed. First, a general initial function $u_0$ cannot be represented exactly using finitely many particles, and thus needs to be approximated. This aspect is briefly addressed in Sect.~\ref{subsec:approximation_initial_conditions}. Second, in general the time evolution \eqref{eq:motion_shock_particle} can not be integrated exactly. Instead, a numerical ODE solver has to be used. This aspect is addressed in Sect.~\ref{subsec:time_integration}.

%---------------------------------------------------------------------------------------------
\subsection{Approximation of the Initial Conditions}
\label{subsec:approximation_initial_conditions}
%---------------------------------------------------------------------------------------------
Whenever the initial function $u_0$ can be represented exactly by an interpolation $U_{P_0}$, one should do so if the number of particles required is computationally acceptable. A particular advantage of the presented particle approach is that discontinuities can be represented exactly. If the initial function cannot be represented exactly, it must be approximated. It is shown in \cite{FarjounSeibold2009_2} that the interpolation \eqref{eq:interpolation} approximates piecewise smooth initial conditions with an error of $O(h^2)$, where $h$ is the maximum distance between particles, if discontinuities are represented exactly. Furthermore, since the method does not require an equidistant placement of particles, adaptive sampling strategies should be used, such as presented in \cite{FarjounSeibold2009_2}. These results are based on the particles be placed exactly on the function $u_0$. More general approximation strategies that do not have this restriction are the subject of current research.

%---------------------------------------------------------------------------------------------
\subsection{Integration in Time}
\label{subsec:time_integration}
%---------------------------------------------------------------------------------------------
The characteristic equation \eqref{eq:characteristic_equations} can easily be integrated exactly. Therefore, characteristic particle movement incurs no integration error, and the next collision time between characteristic particles is explicitly given by \eqref{eq:collision_time}. The particle approach presented and analyzed in \cite{FarjounSeibold2009_1,FarjounSeibold2009_2,FarjounSeibold2009_3} relies on these properties. The downside of those methods is an intrinsic error near shocks. In contrast, the shock-particle method presented in the current paper does not incur any errors around shocks. The downside is an error due to the integration of the ordinary differential equation \eqref{eq:characteristic_equations}. However, it is comparably simple to integrate systems of ODE with very high accuracy. In contrast, the construction of high order numerical approaches that approximate the PDE \eqref{eq:conservation_law} directly (such as ENO~\cite{HartenEngquistOsherChakravarthy1987}/WENO~\cite{LiuOsherChan1994} or Godunov schemes with limiters \cite{VanLeer1974}), is much more challenging. The numerical error analysis shown in Sect.~\ref{sec:numerical_error_analysis_particle_method} seconds this.

Another feature (besides accuracy) that the used numerical ODE solver needs to possess is \emph{event detection}. Since at particle collisions, the system undergoes a discontinuous change (the number of particles is reduced), the ODE solver must detect such events with high accuracy. One way to do this is to use a solver that can provide a high order interpolation. Several such solvers have been derived by Dormand and Prince in \cite{DormandPrince1986} for the Runge-Kutta family of ODE solvers. In {\sc Matlab}, event detection is implemented in particular in \texttt{ode23.m} and \texttt{ode45.m}. As stated in \cite{ShampineReichlet1997}, the latter contains an unpublished\footnote{Details can be found in the {\sc Matlab} file \texttt{ntrp45.m}.} variation of the interpolation presented in \cite{DormandPrince1986}.

To enhance the performance of the adaptive ODE solver it is helpful to have an estimate about the next occurrence of a particle collision. A simple estimate may be obtained by using only the first equation of \eqref{eq:motion_shock_particle}, thus estimating the collision time between neighboring particles by $T_i \approx -\frac{x_{i+1}-x_i}{s(u_{i+1}^-,u_{i+1}^+)-s(u_i^-,u_i^+)}$.

%=============================================================================================
\section{Numerical Error Analysis of the Particle Method}
\label{sec:numerical_error_analysis_particle_method}
%=============================================================================================
We investigate the order of accuracy of the presented particle method, and compare it to the benchmark PDE solver CLAWPACK \cite{Clawpack,LeVeque2002}. The comparison between a finite volume method and a particle method is tricky, since the two approaches are fundamentally different. First, the finite volume approach works with average function values in fixed cells, while with particles the interpolation \eqref{eq:interpolation_shock} defines a function everywhere. This difference can be overcome by constructing errors in the $L^1$ sense from cell averages, as described in \cite{FarjounSeibold2009_2}. Second, the finite volume method has a fixed spacial resolution $\Delta x$, while particles move, merge, and are generally anything but equidistant. Third, in a convergence analysis of a finite volume method, the spacial resolution and the time step are chosen proportional $\Delta t = C\Delta x$. In contrast, the particle method becomes exact if $\Delta t\to 0$, assuming that the initial conditions can be represented exactly by finitely many particles.

\begin{figure}
\begin{minipage}[t]{.24\textwidth}
\includegraphics[width=\textwidth]{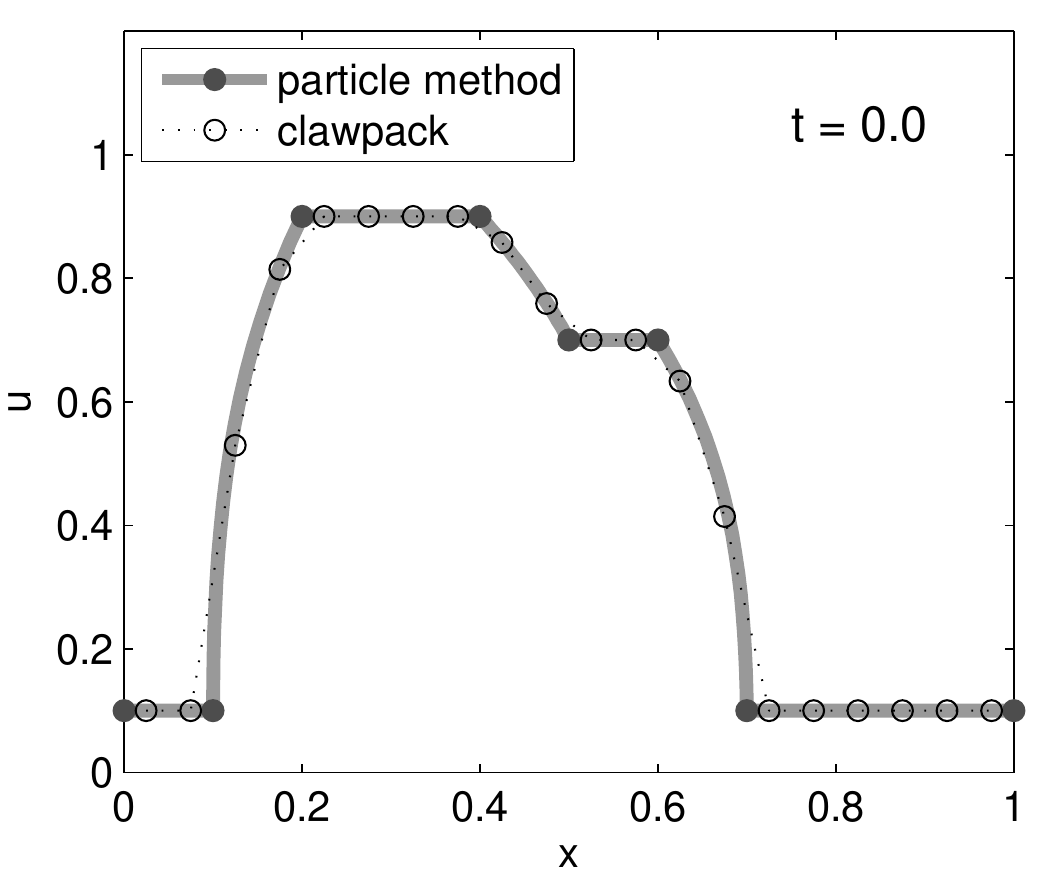}
\end{minipage}
\hfill
\begin{minipage}[t]{.24\textwidth}
\includegraphics[width=\textwidth]{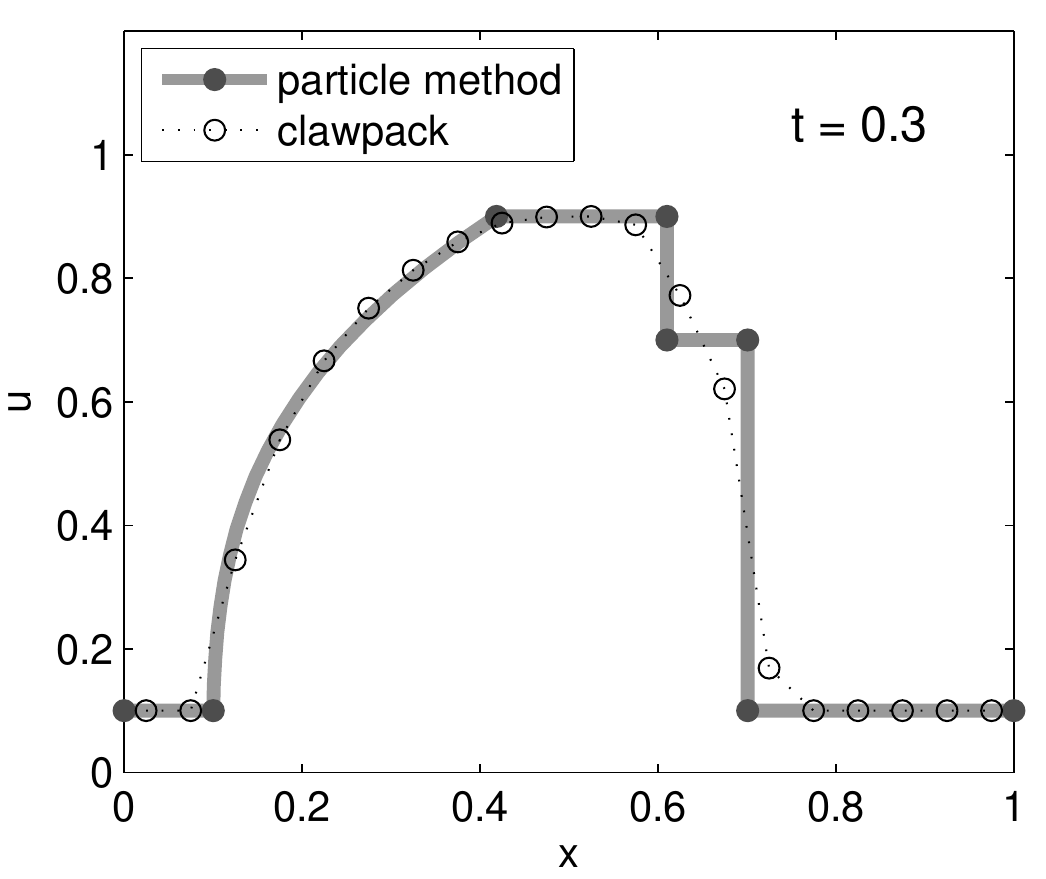}
\end{minipage}
\hfill
\begin{minipage}[t]{.24\textwidth}
\includegraphics[width=\textwidth]{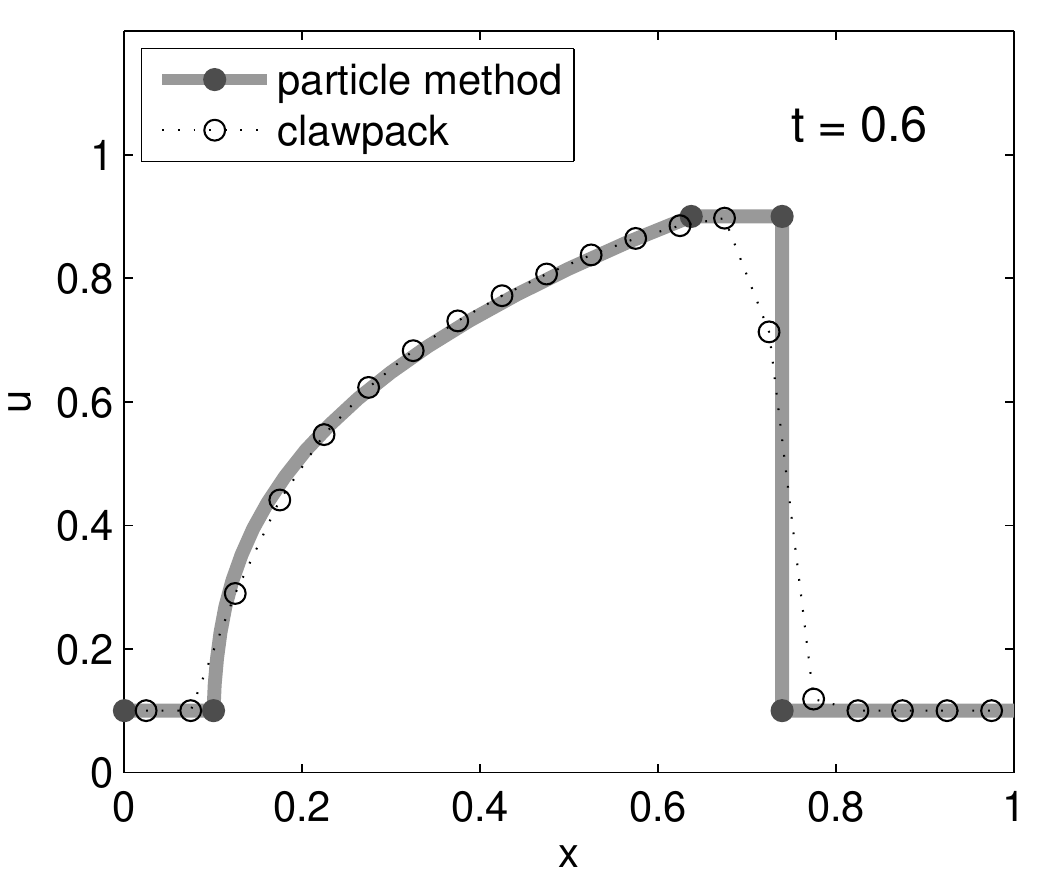}
\end{minipage}
\hfill
\begin{minipage}[t]{.24\textwidth}
\includegraphics[width=\textwidth]{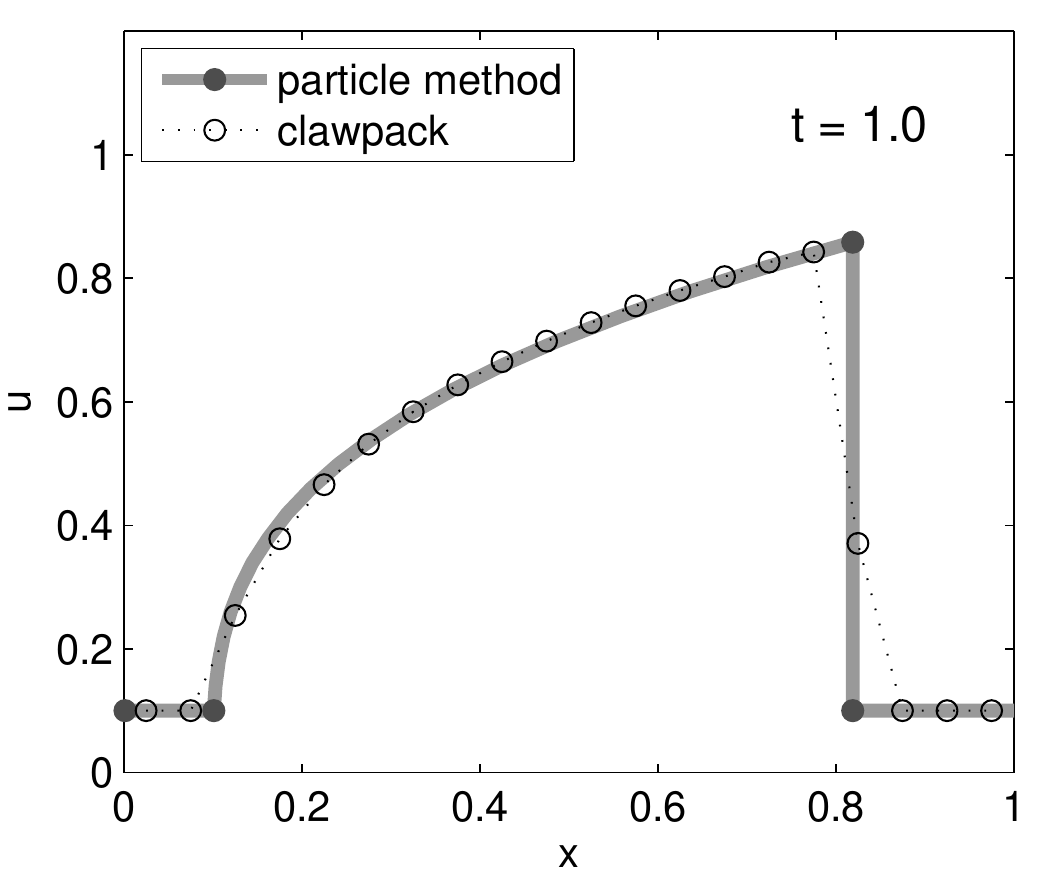}
\end{minipage}
\caption{Time evolution at $t\in\{0,0.3,0.6,1\}$ of the solution to the conservation law \eqref{eq:conservation_law} with $f(u) = \frac{1}{4}u^4$, both by CLAWPACK and by our particle method.}
\label{fig:error_analysis_evolution}
\end{figure}

Here, we consider an initial condition that can be represented exactly by the interpolation \eqref{eq:interpolation_shock}. The reason is that we do not want to measure the error in approximating general initial conditions (for this aspect, please consult \cite{FarjounSeibold2009_2}). Instead, we want to investigate the error in the particle evolution. We consider a second order and a fourth order accurate Runge-Kutta method for the time evolution of \eqref{eq:motion_shock_particle}. Times of particle collisions are found and resolved with the same order of accuracy. For the CLAWPACK runs, we specify a desired CFL number \cite{CourantFriedrichsLewy1928} of 0.8 and let the code choose $\Delta t$ as it finds suitable. In practice, for this problem, this amounts to having $\Delta t\approx \Delta x$.

Specifically, we consider the conservation law \eqref{eq:conservation_law} with flux function $f(u) = \frac{1}{4}u^4$, and initial function $u_0(x) = U_{P_0}(x)$, which is the interpolation \eqref{eq:interpolation} defined by the characteristic particles $P_0 = \{(0,0.1), (0.1,0.1), (0.2,0.9), (0.4,0.9), (0.5,0.7), (0.6,0.7), (0.7,0.1), (1.0,0.1)\}$. The time evolution of the solution is shown in Fig.~\ref{fig:error_analysis_evolution} in four snapshots at $t\in\{0,0.3,0.6,1\}$. In fact, what is shown is the solution obtained by the particle method, integrated with an accuracy that the error is not noticeably in the eye norm. For a comparison, we show the results obtained by a second order CLAWPACK method, with $\Delta x = 0.05$.

\begin{figure}
\centering
\begin{minipage}[t]{.8\textwidth}
\includegraphics[width=\textwidth]{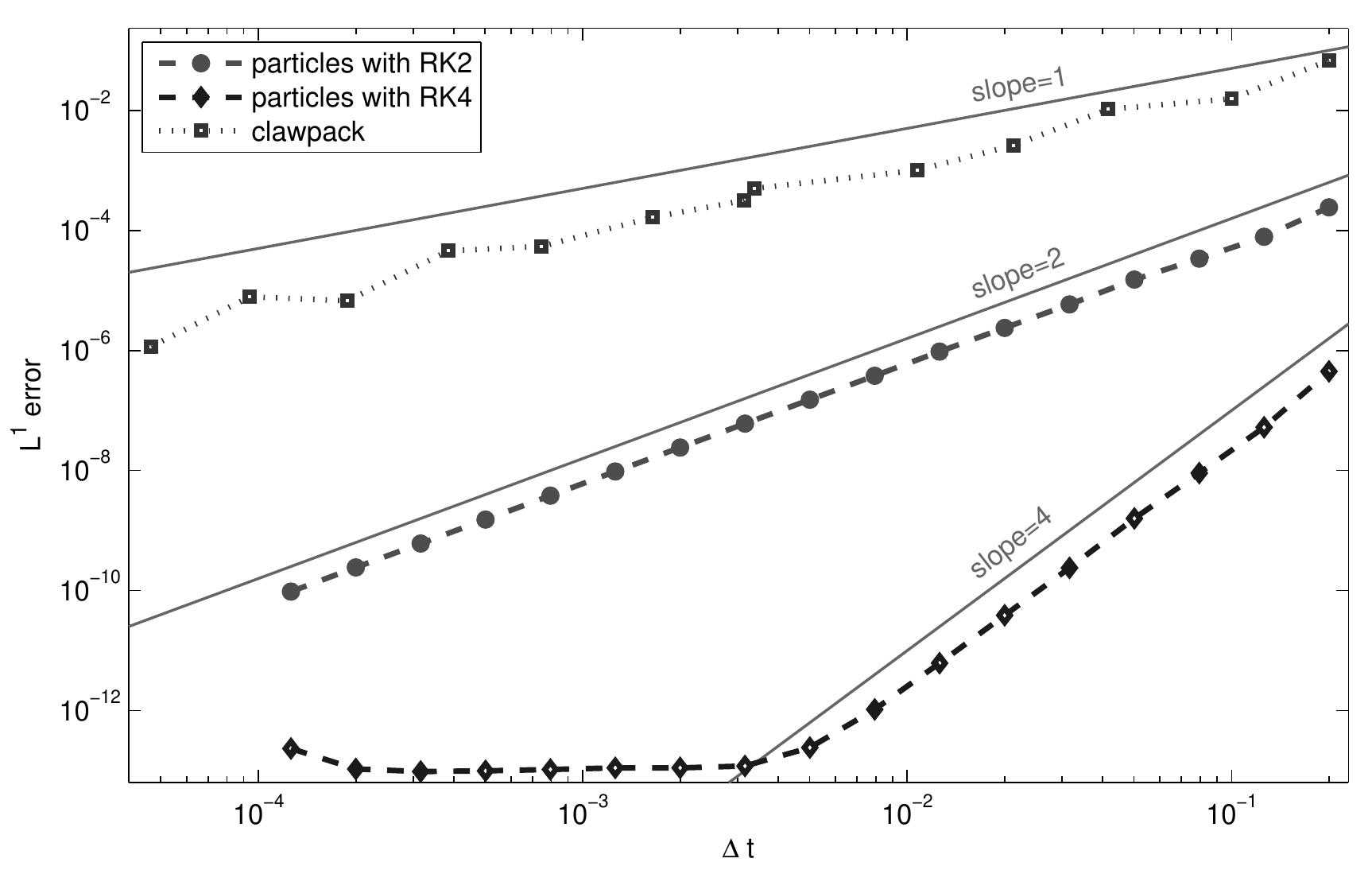}
\end{minipage}
\caption{Error convergence of the particle method in comparison with CLAWPACK. The dashed graphs denote the particle method, with RK2 (dots) and RK4 (diamonds) time stepping. The dotted graph represents CLAWPACK.}
\label{fig:error_analysis}
\end{figure}

The convergence of the error for various approaches is shown in
Fig.~\ref{fig:error_analysis}. We consider the $L^1([0, 1])$ error at
time $t=1$. One can observe that the overall order of the particle
method equals the order of the ODE solver used. Thus, with the
standard RK4 solver, machine accuracy is obtained already for moderate
time steps. Note again that we do not need to increase the number of
particles to obtained convergence. This is a crucial advantage of the
presented particle method. 
In comparison, the CLAWPACK method yields an order of convergence
slightly better than first order. This is particularly due to the
presence of the shocks in the solution, and the large
derivatives in the initial condition.

%=============================================================================================
\section{Stiff Reaction Kinetics}
\label{sec:reaction_kinetics}
%=============================================================================================
Many problems in chemical reaction kinetics can be described by advection-reaction equations, for which the reaction happens on a much faster time scale than the advection. We consider the balance law
\begin{equation}
u_t+\prn{f(u)}_x = \psi(u)\;,
\label{eq:advection_reaction}
\end{equation}
where $0\le u\le 1$ represents the density of some chemical quantity. The advection is given by the nonlinear flux function $f$, which is assumed convex (the case of $f$ concave is analogous) and to be of order $O(1)$. The reactions are described by the source $\psi$. Here, we consider a stiff bistable reaction term
\begin{equation}
\psi(u) = \tfrac{1}{\tau} u(1-u)(u-\beta)
\label{eq:reaction_term}
\end{equation}
where $0<\beta<1$ is a fixed constant. This source term drives the values of $u<\beta$ towards 0, and values of $u>\beta$ towards 1. The reactions happen on a much faster time scale $O(\tau)$, where $\tau\ll 1$. This example is presented for instance in \cite{HelzelLevequeWarnecke2000}. Since the source term $\psi$ does not act in a discontinuity, equation \eqref{eq:advection_reaction} possesses shock solutions as the homogeneous problem \eqref{eq:conservation_law} does. In addition, it has traveling wave solutions that connect a left state $u_L \approx 0$ with a right state $u_R \approx 1$ by a continuous function. To find those solutions, we make a traveling wave ansatz $u(x,t) = v(\xi)$, where $\xi = \frac{x-rt}{\tau}$ is the self similar variable. This transforms equation \eqref{eq:advection_reaction} into a first order ordinary differential equation for $v$
\begin{equation}
v'(\xi) = \tfrac{v(\xi)(1-v(\xi))(v(\xi)-\beta)}{f'(v(\xi))-r}\;.
\label{eq:advection_reaction_traveling_wave}
\end{equation}
For $v = \beta$, the numerator of \eqref{eq:advection_reaction_traveling_wave} vanishes. A solution that connects a state $v_L<\beta$ to a state $v_R>\beta$ can only pass through $v=\beta$ if the denominator of \eqref{eq:advection_reaction_traveling_wave} vanishes as well, i.e.~$r = f'(\beta)$, which yields the velocity of the traveling wave. The shape of the wave $v(\xi)$ is then found by integrating \eqref{eq:advection_reaction_traveling_wave} using $r = f'(\beta)$. Since $u(x,t) = v(\frac{x-rt}{\tau})$, the traveling wave has a thickness (in the $x$-coordinate) of $O(\tau)$. This analysis of traveling wave solutions is in spirit similar to detonation waves of reacting gas dynamics \cite{FickettDavis1979}. The value $\beta$ plays the role of a \emph{sonic point} in detonation waves. The advection-reaction equation \eqref{eq:advection_reaction} is studied in \cite{FanJinTeng2000}. The traveling wave solution \eqref{eq:advection_reaction_traveling_wave} results from a balance of the advection term (which flattens the profile) and the reaction term (which sharpens the profile). Since $\tau$ is very small, these traveling waves look very similar to shocks, yet they face the opposite direction, and travel at a different velocity (if $f'(\beta)\neq f(1)-f(0)$).

In computations, the recovery of the exact shape of the traveling waves is typically not very important. However, the recovery of their correct propagation velocity is crucial. As described in \cite{LeVeque2002}, equation \eqref{eq:advection_reaction} can be treated in a straightforward fashion using classical finite volume approaches. However, correct propagation velocities of traveling waves are only obtained if these are numerically resolved. Thus, with equidistant grids, one is forced to use a very fine grid resolution $h = O(\tau)$, which is unnecessarily costly away from the traveling wave. This problem can be circumvented using adaptive mesh refinement techniques, however, at the expense of simplicity. An alternative approach, presented in \cite{HelzelLevequeWarnecke2000}, yields correct traveling wave velocities even with grid resolutions $h\gg O(\tau)$, by encoding specific information about the structure of the reaction term into a Riemann solver.

%=============================================================================================
\section{A Particle Method for Stiff Reaction Kinetics}
\label{sec:particl_method_stiff_reaction_kinetics}
%=============================================================================================
Here, we present an approach based on the particle method introduced in Sect.~\ref{sec:exact_particle_method} that uses the ``subgrid'' information provided by the interpolation \eqref{eq:interpolation_shock} to yield correct propagation velocities of traveling waves, without specifically resolving them. The characteristic equations for \eqref{eq:advection_reaction} are
\begin{equation}
\left\{
\begin{aligned}
\dot x &= f'(u) \\
\dot u &= \psi(u)\;.
\end{aligned}
\right.
\label{eq:advection_reaction_characteristic_equations}
\end{equation}
As before, our goal is to generalize these characteristic equations to obtain an evolution for shock particles. This requires the definition of an interpolation. We use the interpolant \eqref{eq:interpolation_shock}, as if there were no reaction term. Clearly, this is an approximation, and the resulting method is not exact anymore. At any time $t$, we define the solution by shock particles $P(t) = \{(x_1,u_1^-,u_1^+),\dots,(x_n,u_n^-,u_n^+)\}$, and the interpolation $U_{P(t)}$, defined by \eqref{eq:interpolation_shock}. Adding the reaction term to \eqref{eq:motion_shock_particle}, we now let the particles move according to
\begin{equation}
\left\{
\begin{aligned}
\dot x_i   &= s(u_i^-,u_i^+) \\
\dot u_i^- &= \prn{s(u_i^-,u_i^+)-f'(u_i^-)}
\frac{f'(u_{i-1}^+)-f'(u_i^-)}{x_{i-1}-x_i}\frac{1}{f''(u_i^-)}+\psi(u_i^-) \\
\dot u_i^+ &= \prn{s(u_i^-,u_i^+)-f'(u_i^+)}
\frac{f'(u_{i+1}^-)-f'(u_i^+)}{x_{i+1}-x_i}\frac{1}{f''(u_i^+)}+\psi(u_i^+)
\end{aligned}
\right.
\label{eq:advection_reaction_motion_shock_particle}
\end{equation}
where the shock speed $s(u_i^-,u_i^+)$ is defined as before by \eqref{eq:difference_quotient}. By construction, shocks move at their correct velocity, and for a characteristic particle $u_i^- = u_i^+$, \eqref{eq:advection_reaction_motion_shock_particle} reduces to the correct characteristic evolution \eqref{eq:advection_reaction_characteristic_equations}. Clearly, this approach does not remove the stiffness in time. Hence, an implicit ODE solver should be used. System \eqref{eq:advection_reaction_motion_shock_particle} yields an accurate solution on the particles themselves, as well as an accurate evolution of shocks. However, traveling waves, as given by \eqref{eq:advection_reaction_traveling_wave} are not represented well. The reason is that each particle moves very quickly towards 0 or 1. Then, the reaction term is not considered anymore, since $\psi(0) = 0$ and $\psi(1) = 0$. In order to correctly represent traveling waves, the continuous solution that goes through the sonic point $\beta$ has to be considered. We do so by the following correction approach.

\begin{figure}
\centering
\begin{minipage}[t]{.7\textwidth}
\includegraphics[width=\textwidth]{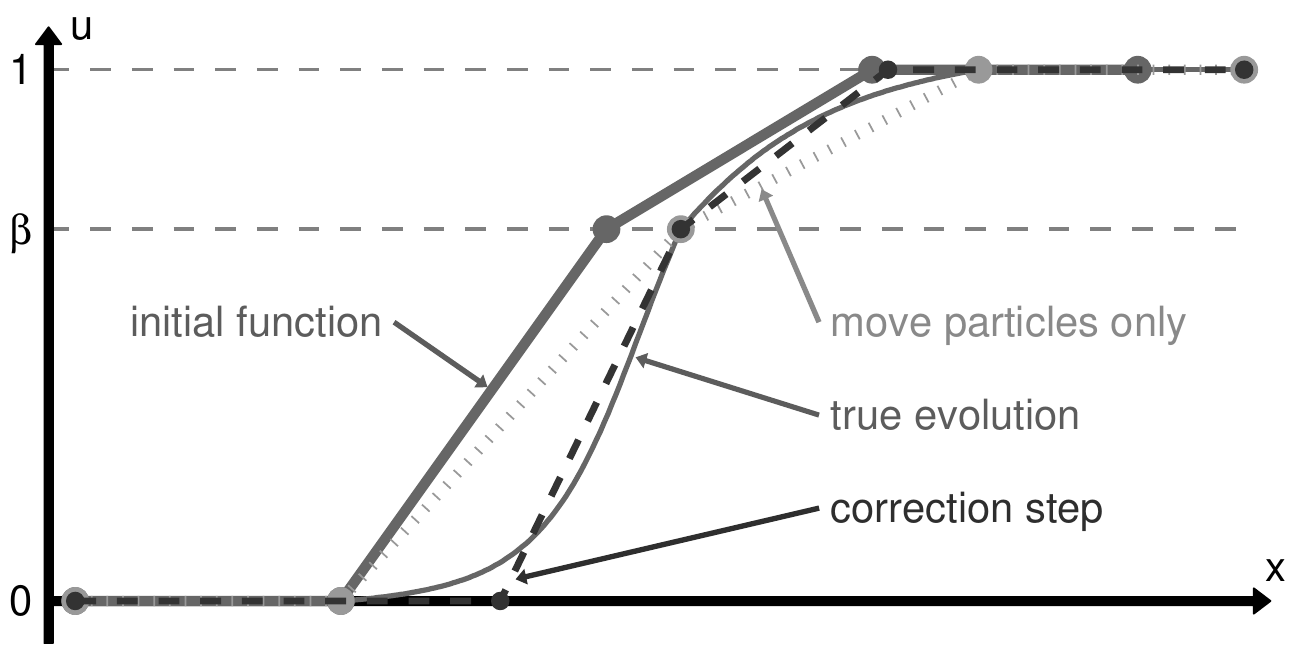}
\end{minipage}
\caption{Correction approach for the advection-reaction equation with dominant reaction term. The vertical dashed lines denote the three roots of the source term.}
\label{fig:advection_reaction_correction}
\end{figure}

We assume that the bistable nature of the reaction term and the value of the unstable root $\beta$ are known. Whenever the solution increases (in $x$) from a value $u<\beta$ to a value $u>\beta$, a special characteristic particle is placed at $u = \beta$, that moves with velocity $\dot{x} = f'(\beta)$. We call such a particle a \emph{sonic particle}. Each particle that neighbors a sonic particle is treated in a special way. As a motivation, consider the situation shown in Fig.~\ref{fig:advection_reaction_correction}: A left state 0 is followed by a sonic particle, which is followed by a right state 1. The interpolant shown is \eqref{eq:interpolation_shock} for $f(u) = \frac{1}{2}u^2$. The thick solid graph shows the initial configuration at time $t$. The dotted graph shows the obtained solution when evolving the particles for a time step $\Delta t$ according to the method of characteristics \eqref{eq:advection_reaction_characteristic_equations}. Since $\psi$ vanishes on all particles, the source is neglected. Hence, this approach does not lead to correct traveling wave solutions. The thin solid graph shows the correct evolution of the initial function, considering the interpolation. This function cannot be represented exactly by particles and the interpolant \eqref{eq:interpolation_shock}, but it can be approximated by modifying the $x$-values of the two particles that neighbor the sonic particle, in such a way that the areas under the solution both left and right of the sonic particle are reproduced correctly. The dashed graph shows the function that results from this correction. Below, we describe the correction approach in detail.

%---------------------------------------------------------------------------------------------
\subsection{Computational Approach}
%---------------------------------------------------------------------------------------------
Consider a particle $(x_{i-1},u_{i-1}^-,u_{i-1}^+)$ that is a left neighbor of a sonic particle $(x_i,\beta)$. The case of a right neighbor particle is analogous. Let the interpolant on $[x_{i-1},x_i]$ be denoted by $U(x)$, and its inverse function $X(u)$. From \eqref{eq:interpolation_shock} it follows that $X'(u) = \frac{x_i-x_{i-1}}{f'(\beta)-f'(u_{i-1}^+)}f''(u)$. Using the interpolant $U(x)$, we can integrate the reaction term between the two particles. The substitution rule yields
\begin{equation}
\int_{x_{i-1}}^{x_i} \!\!\psi(U(x))\ud{x}
= \int_{u_{i-1}^+}^\beta \!\!\psi(u)X'(u)\ud{u}
= \frac{x_i-x_{i-1}}{f'(\beta)-f'(u_{i-1}^+)}
\int_{u_{i-1}^+}^\beta \!\!\psi(u)f''(u)\ud{u}\;.
\label{eq:reaction_term_integrated}
\end{equation}
This expression represents the full influence of the reaction term on the continuous solution between the value $u_{i-1}^+$ and the sonic value $\beta$. As derived in \cite{FarjounSeibold2009_2}, the area under the interpolant on $[x_{i-1},x_i]$ is given by
\begin{equation*}
\int_{x_{i-1}}^{x_i} U(x)\ud{x} = (x_i-x_{i-1})\,a(u_{i-1}^+,\beta)\;,
\end{equation*}
where $a(v,w) = \frac{\brk{f'(u)u-f(u)}_v^w}{\brk{f'(u)}_v^w}$ is a nonlinear average.
Now consider a new particle $(x_{i-1}+\Delta x_{i-1},u_{i-1}^+,u_{i-1}^+)$ be inserted between $x_{i-1}$ and $x_i$. This insertion changes the area by
\begin{equation}
\Delta A = \Delta x_{i-1}\prn{a(u_{i-1}^+,\beta)-u_{i-1}^+}
= \Delta x_{i-1}\frac{f'(\beta)(\beta-u_{i-1}^+)-(f(\beta)-f(u_{i-1}^+))}{f'(\beta)-f'(u_{i-1}^+)}\;.
\label{eq:area_change}
\end{equation}
Equating the rate of area change $\frac{\Delta A}{\Delta t}$, given by \eqref{eq:area_change}, and expression \eqref{eq:reaction_term_integrated}, yields
\begin{equation*}
\tfrac{\Delta x_{i-1}}{\Delta t} = c(u_{i-1}^+,\beta)\prn{x_i-x_{i-1}}\;,
\end{equation*}
where $c(v,w) = \frac{\int_v^w \psi(u)f''(u)\ud{u}}{f'(w)(w-v)-(f(w)-f(v))}$. The scaling $f = O(1)$ and $\psi = O(\tfrac{1}{\tau})$ implies that $c(v,w) = O(\tfrac{1}{\tau})$, if $w-v = O(1)$. A similar derivation for the right neighbor yields that a new particle $(x_{i+1}-\Delta x_{i+1},u_{i+1}^-,u_{i+1}^-)$ needs to be inserted with
\begin{equation*}
\tfrac{\Delta x_{i+1}}{\Delta t} = c(u_{i+1}^-,\beta)\prn{x_i-x_{i+1}}\;.
\end{equation*}
Due to the bistable nature of the reaction term, one will frequently encounter a nearly constant left state, i.e.~both $\abs{u_{i-1}^+-u_{i-1}^-}$ and $\abs{u_{i-2}^+-u_{i-1}^-}$ are very small. In this case, the particle $i-1$ can just be moved by $\Delta x_{i-1}$, instead of creating a new particle. Using a characteristic particle notation only, the resulting modified evolution equations are
\begin{equation*}
\left\{
\begin{aligned}
\dot x_{i-1} &= f'(u_{i-1})+c(u_{i-1},\beta)\prn{x_{i}-x_{i-1}} \\
\dot u_{i-1} &= \psi(u_{i-1}) \\
\dot x_i     &= f'(\beta) \\
\dot u_i     &= 0 \\
\dot x_{i+1} &= f'(u_{i+1})+c(u_{i+1},\beta)\prn{x_{i+1}-x_{i}} \\
\dot u_{i+1} &= \psi(u_{i+1})\;.
\end{aligned}
\right.
\end{equation*}
This implies that
\begin{equation*}
\tfrac{d}{dt}\prn{x_{i}-x_{i-1}} = \prn{f'(\beta)-f'(u_{i-1})}-c(u_{i-1},\beta)\prn{x_{i}-x_{i-1}}\;,
\end{equation*}
i.e.~the distance $x_i-x_{i-1}$ converges to an equilibrium value $\frac{f'(\beta)-f'(u_{i-1})}{c(u_{i-1},\beta)}$. Since $c(u_{i-1},\beta) = O(\tfrac{1}{\tau})$, the equilibrium distance between the sonic particle and its neighbors is $O(\tau)$. Hence, the presented approach yields a traveling wave solution, represented by three particles that move at the correct velocity $f'(\beta)$, and whose distance from each other scales, correctly, like $O(\tau)$.

%=============================================================================================
\section{Numerical Results on Reaction Kinetics}
\label{sec:numerical_results_reaction_kinetics}
%=============================================================================================
For assessing our method numerically, we compare it to the benchmark PDE solver CLAWPACK \cite{Clawpack}, for the advection-reaction equation \eqref{eq:advection_reaction}. We consider the reaction term \eqref{eq:reaction_term} with $\beta = 0.8$, and choose the Burgers' flux $f(u) = \frac{1}{2}u^2$. Four different values for the reaction time scale are considered: $\tau\in\{0.1,0.024,0.008,0.004\}$. The spacial resolution is $\Delta x = 0.02$. As initial condition, we use $u(x,0) = 0.9\,\exp\prn{-150(x-\tfrac{1}{2})^4}$. For solving this problem using CLAWPACK we simply use the code from the CLAWPACK website \cite[Chapter 17]{LeVeque2002}. This code was written specifically to solve this stiff Burgers' problem.

\begin{figure}
\begin{minipage}[t]{.24\textwidth}
\includegraphics[width=\textwidth]{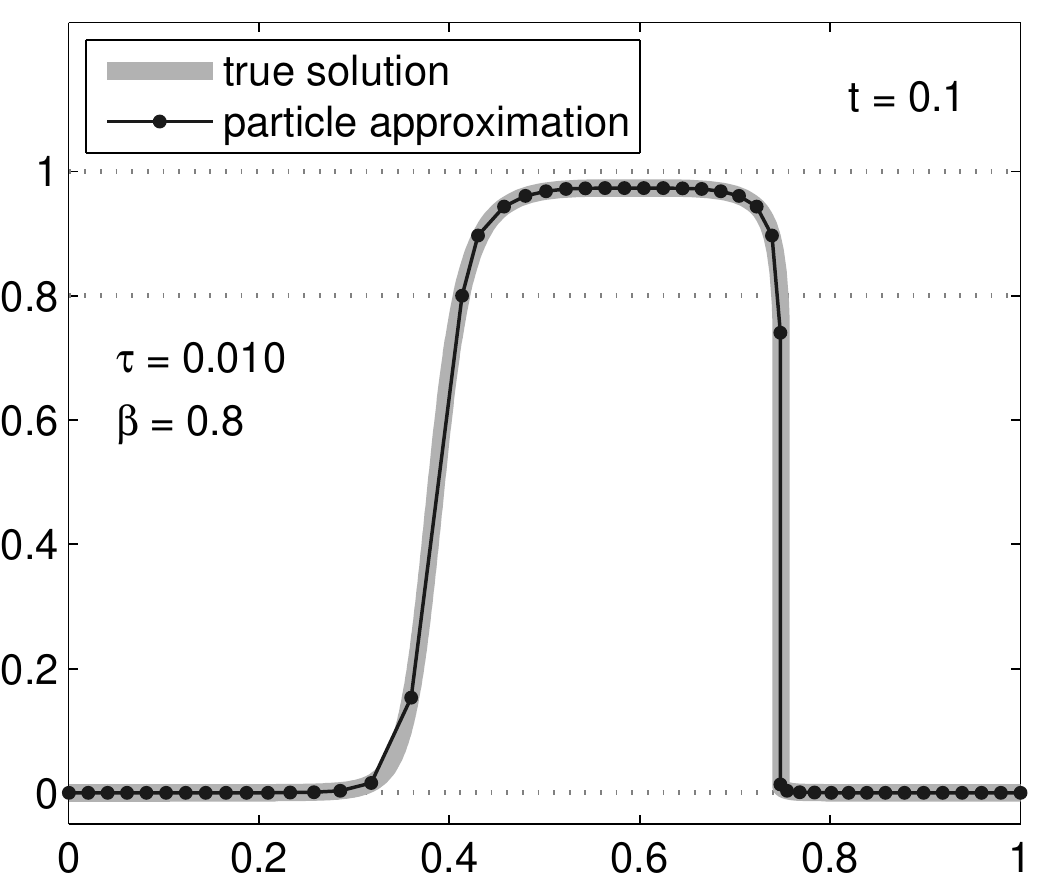}
\end{minipage}
\hfill
\begin{minipage}[t]{.24\textwidth}
\includegraphics[width=\textwidth]{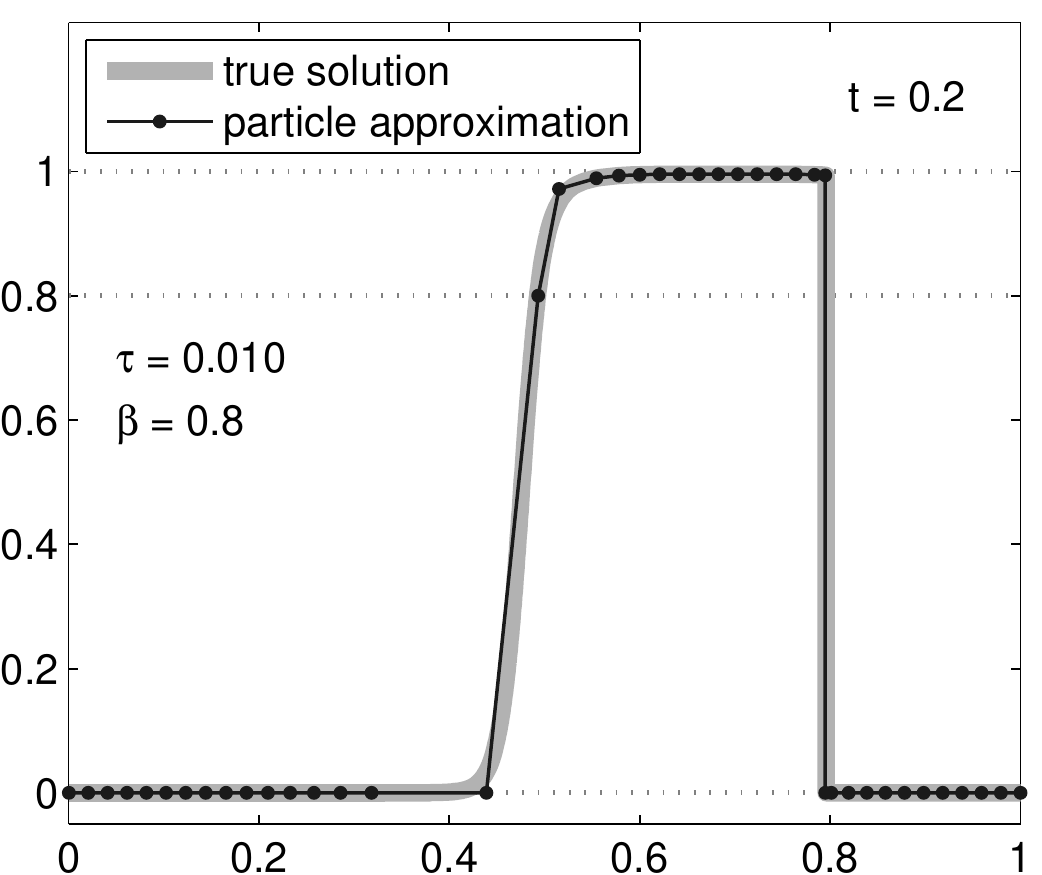}
\end{minipage}
\hfill
\begin{minipage}[t]{.24\textwidth}
\includegraphics[width=\textwidth]{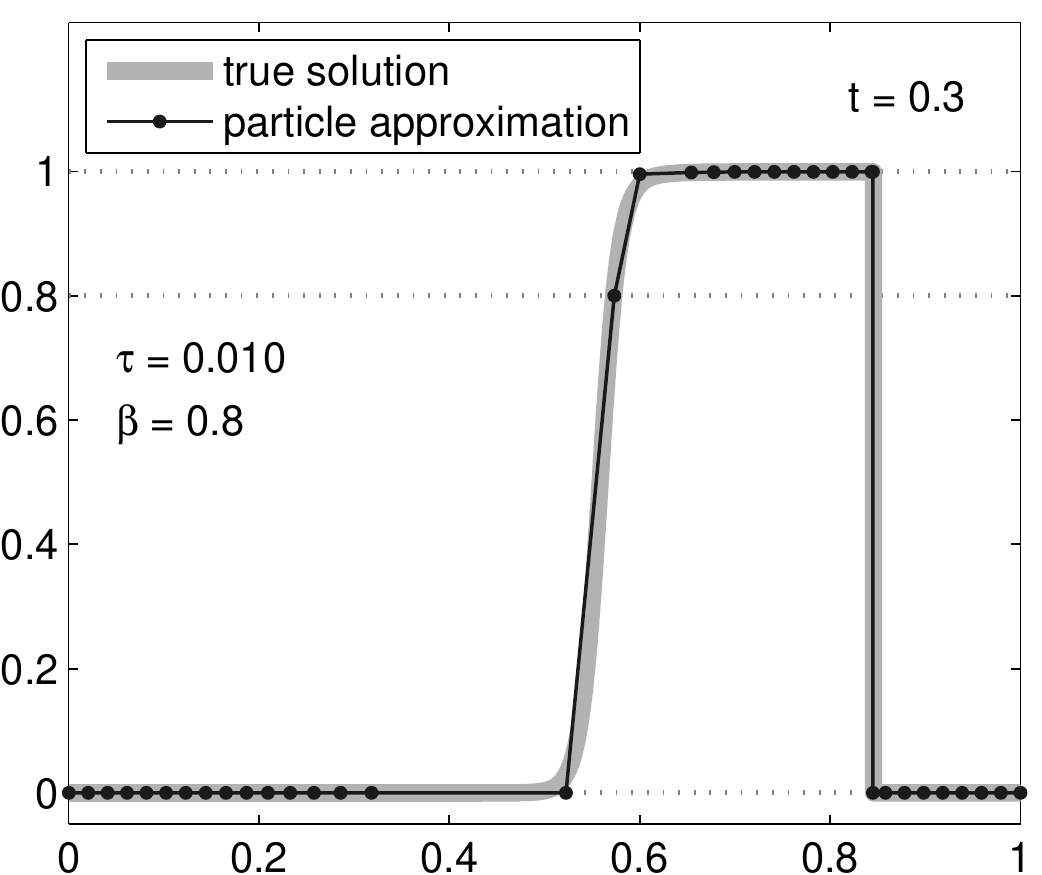}
\end{minipage}
\hfill
\begin{minipage}[t]{.24\textwidth}
\includegraphics[width=\textwidth]{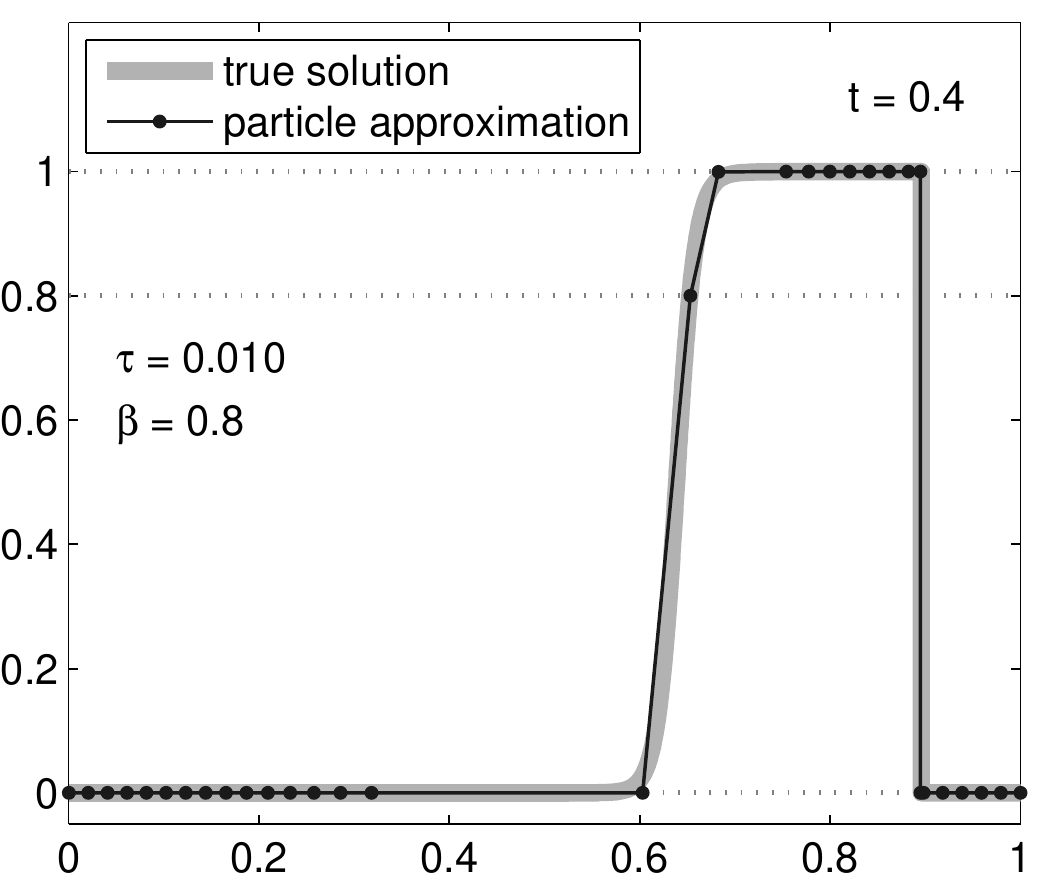}
\end{minipage}
\caption{Time evolution at $t\in\{0.1,0.2,0.3,0.4\}$ of the advection reaction equation \eqref{eq:advection_reaction} with $\tau = 0.01$. The thick gray graph shows the true solution, while the dots denote the particle approximation.}
\label{fig:advection_reaction_evolution}
\end{figure}

The time evolution of the solution of \eqref{eq:advection_reaction} is shown in Fig.~\ref{fig:advection_reaction_evolution} in four snapshots at $t\in\{0.1,0.2,0.3,0.4\}$. The thick grey graph shows the true solution. The solid dots denote the particle method. One can see that at $t=0.1$ the solution is still in the transient phase, since characteristic particles are still visible on the (soon-to-be) detonation wave. At $t=0.2$, the wave structure is almost converged. The plots at $t=0.3$ and $t=0.4$ show how the detonation wave catches up to the shock.

\begin{figure}
\begin{minipage}[t]{.24\textwidth}
\includegraphics[width=\textwidth]{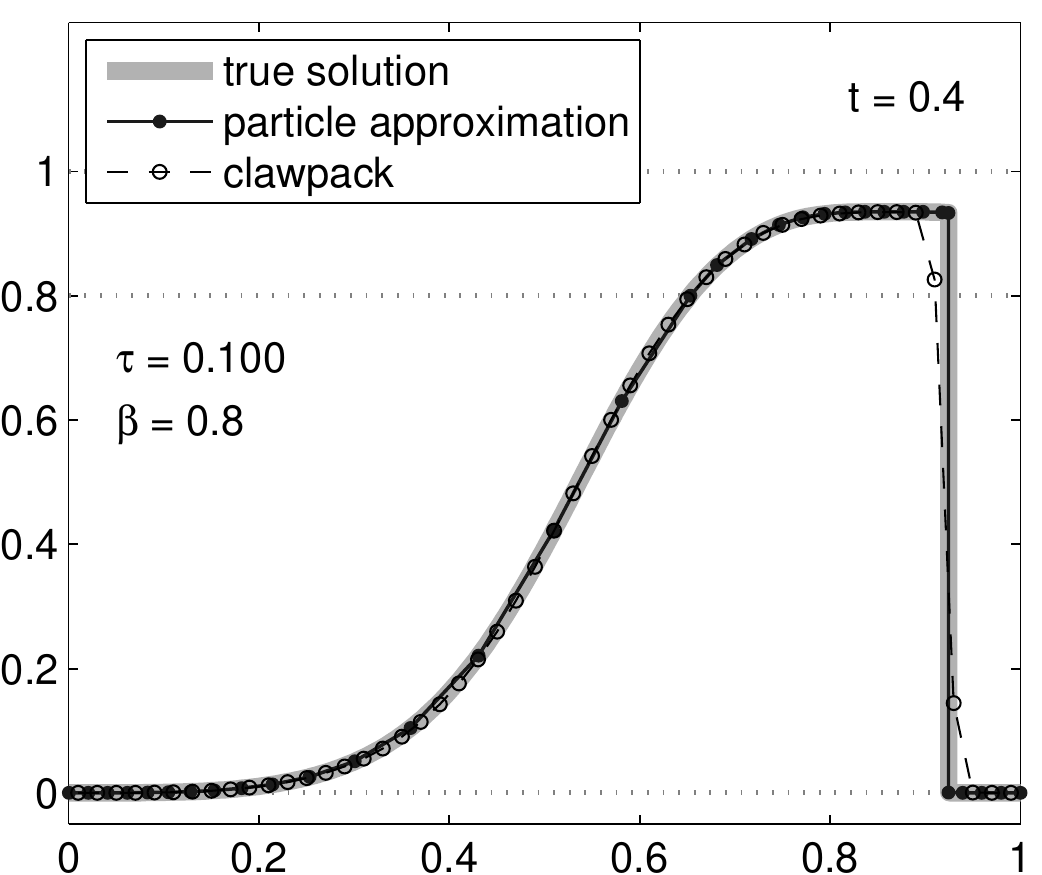}
\end{minipage}
\hfill
\begin{minipage}[t]{.24\textwidth}
\includegraphics[width=\textwidth]{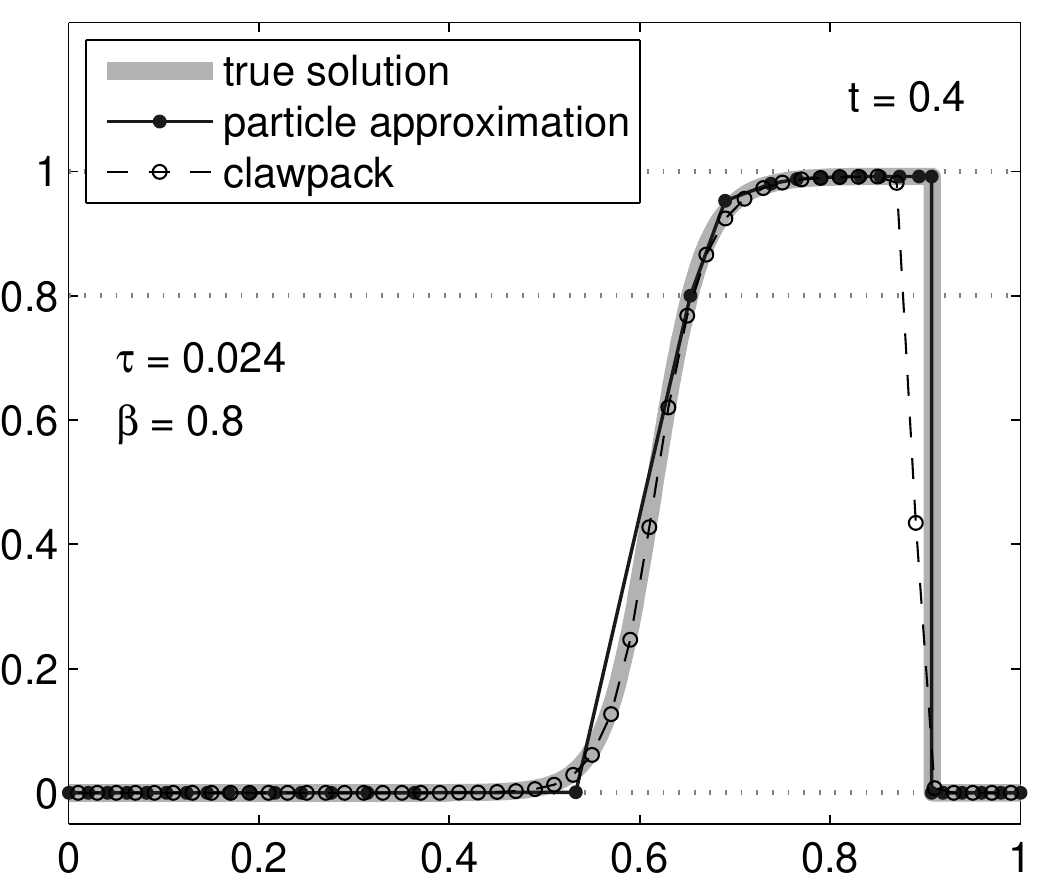}
\end{minipage}
\hfill
\begin{minipage}[t]{.24\textwidth}
\includegraphics[width=\textwidth]{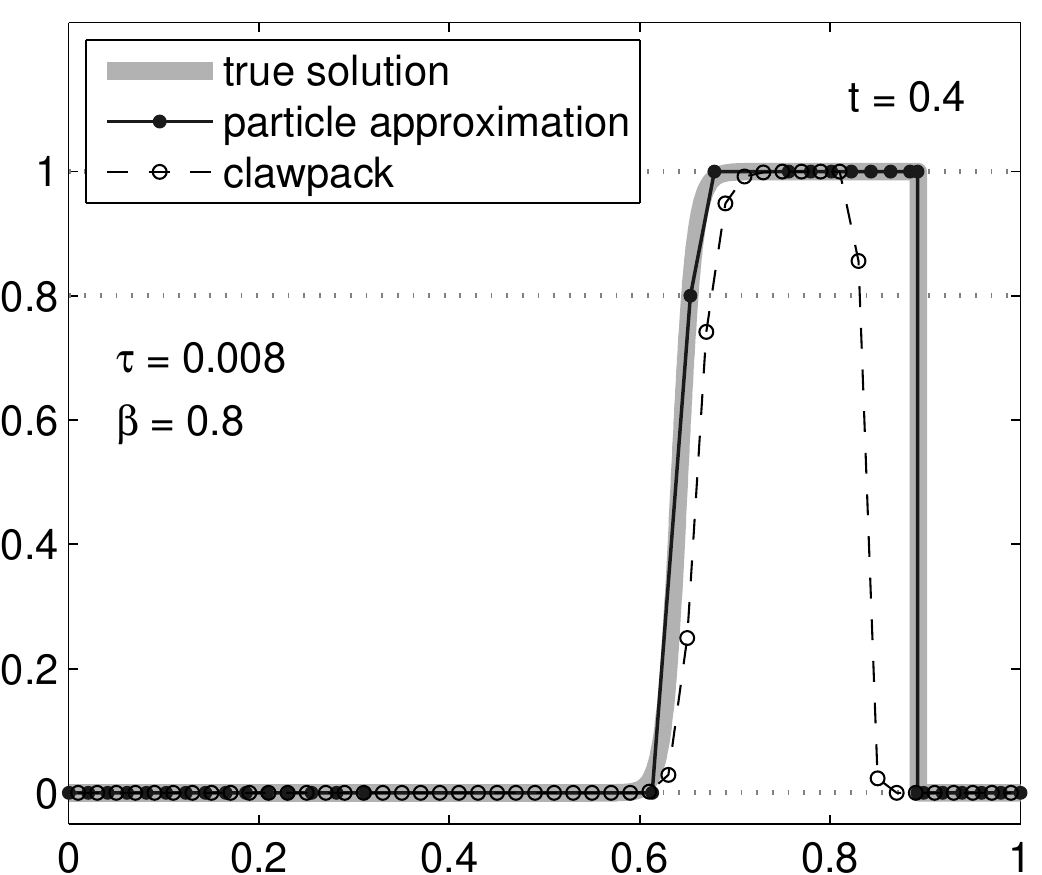}
\end{minipage}
\hfill
\begin{minipage}[t]{.24\textwidth}
\includegraphics[width=\textwidth]{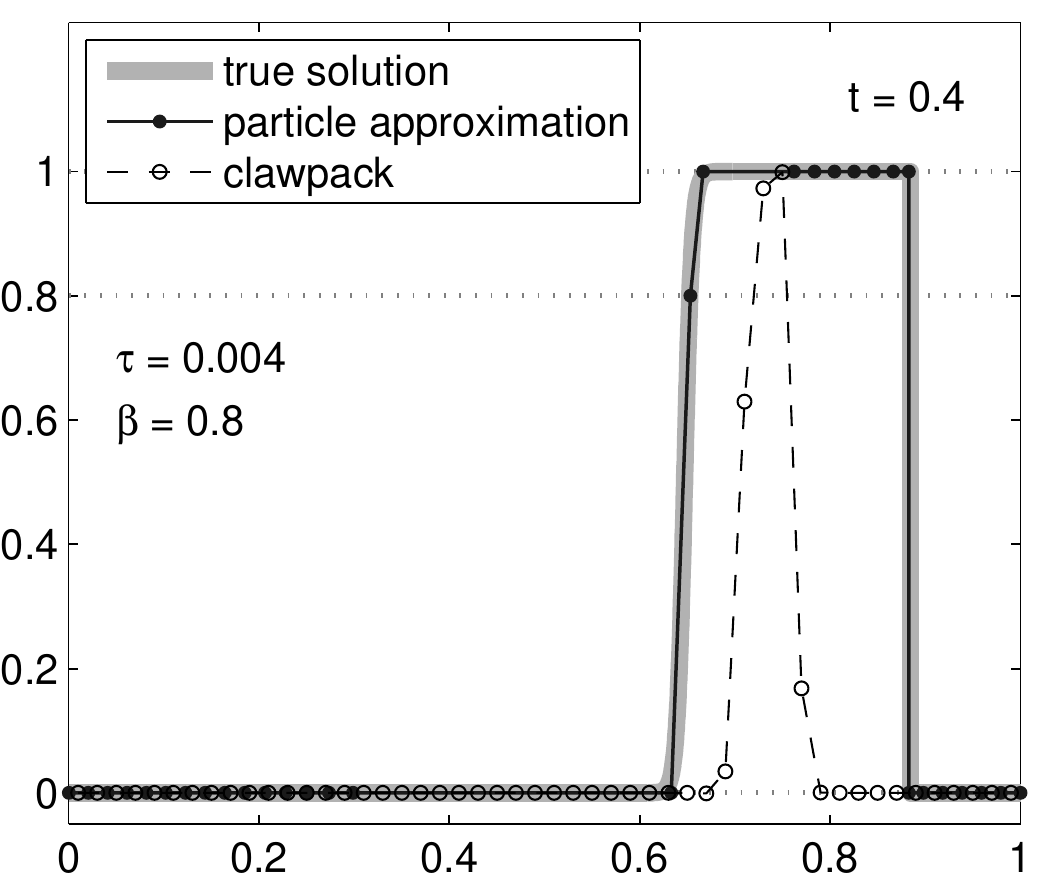}
\end{minipage}
\caption{Computational results for the advection reaction equation \eqref{eq:advection_reaction} with $\tau\in\{0.1,0.024,0.008,0.004\}$. The thick grey graph shows the true solution, while the dots denote the particle approximation.}
\label{fig:advection_reaction_comparison}
\end{figure}

Figure~\ref{fig:advection_reaction_comparison} shows the solution at the final time $t=0.4$, for four choices of $\tau\in\{0.1,0.024,0.008,0.004\}$. The thick grey graph shows the true solution. The solid dots denote the particle method. The circles show the CLAWPACK results. Note that for $\tau=0.1$, the solution is still in the transient phase, while for the other values, the detonation wave is comparably well established. For the selected resolution $\Delta x = 0.02$, CLAWPACK successfully captures the shock for $\tau\in\{0.1,0.024\}$. The detonation wave for $\tau=0.024$ is nicely represented as well. However, CLAWPACK clearly fails to resolve the shock and the detonation for $\tau=0.004$. The intermediate value $\tau=0.008$ is on the edge of failure. In comparison, the particle method works for all values of $\tau$. The shock is optimally sharp, and the detonation wave moves at the correct velocity and has the correct width. The trouble that CLAWPACK is having with these equations is due to the stiff source. The problem is that the width of the shock is always $O(\Delta x)$, but this is too large when $\tau$ becomes small. The source is too active both in the detonation shock and in the regular forward-facing shock. This leads to incorrect shock speeds.

%=============================================================================================
\section{Conclusions and Outlook}
%=============================================================================================
We have presented a particle method that solves scalar one-dimensional hyperbolic conservation laws exactly, up to the accuracy of an ODE solver and up to errors in the approximation of the initial conditions. The numerical solution is defined everywhere. It is composed of local similarity solutions, separated by shocks. A numerical convergence analysis verified this accuracy claim for the flux function $f(u)=u^4/4$. In this example, the basic RK4 method yields solutions up to machine accuracy using a few hundred time steps. Since general initial conditions can be approximated with second-order accuracy (see \cite{FarjounSeibold2009_2}), the overall method is at least second order accurate, even in the presence of shocks.

The method has also been extended to balance laws that describe stiff reaction kinetics. The tracking of a sonic particle in combination with a correction approach for neighboring particles yields a method that evolves detonation waves at correct velocities, without actually resolving their internal dynamics. The evolution of the sonic particle comes naturally in the considered particle method, while for classical fixed grid methods, a similar approach is much less natural. Numerical tests show that the particle method approximates the true solutions very well, even for fairly stiff systems, for which CLAWPACK fails due to an under-resolution of the wave and the shock.

The philosophy of the considered application in stiff reaction kinetics is that one can find efficient approaches for more complex problems by using the exact conservation law solver at the basis. It is the subject of current and future research to apply the same philosophy in other applications. Examples are nonlinear flows on networks. The presented particle method can be used to solve the actual evolution on each edge exactly. While an approximation has to be done at the network nodes, it is plausible to believe that this approach yields more accurate results that classical method that are far from exact on the edges themselves. Further generalizations to consider are the treatment of higher space dimensions using dimensional splitting, and systems of conservation/balance laws.

%=============================================================================================
\section*{Acknowledgments}
%=============================================================================================
The authors would like to acknowledge the support by the
National Science Foundation.
Y.~Farjoun was partially supported by NSF grant DMS--0703937.
B.~Seibold was partially supported by NSF grant DMS--0813648.
The first author was also financed by the Spanish Ministry of Science and Innovation under grant
FIS2008-04921-C02-01.
%The authors would also like to thank the Mathematics Department at MIT, where much of the work was initiated and carried out.

%=============================================================================================
\bibliographystyle{plain}
\bibliography{references_complete}

\begin{thebibliography}{10}

\bibitem{BukietPeleskoLiSachdev1996}
B.~Bukiet, J.~Pelesko, X.~L. Li, and P.~L. Sachdev.
\newblock A characteristic based numerical method for nonlinear wave equations.
\newblock {\em Computers Math. Applic.}, 31(7):75--79, 1996.

\bibitem{Clawpack}
Clawpack.
\newblock Website.
\newblock \url{http://www.clawpack.org}.

\bibitem{CocliteGaravelloPiccoli2005}
G.~M. Coclite, M.~Garavello, and B.~Piccoli.
\newblock Traffic flow on a road network.
\newblock {\em SIAM J. Math. Anal.}, 36(6):1862--1886, 2005.

\bibitem{CourantFriedrichsLewy1928}
R.~Courant, K.~Friedrichs, and H.~Lewy.
\newblock {\"U}ber die partiellen {D}ifferenzengleichungen der mathematischen
  {P}hysik.
\newblock {\em Mathematische Annalen}, 100(1):32–--74, 1928.

\bibitem{CourantIsaacsonRees1952}
R.~Courant, E.~Isaacson, and M.~Rees.
\newblock On the solution of nonlinear hyperbolic differential equations by
  finite differences.
\newblock {\em Comm. Pure Appl. Math.}, 5:243--255, 1952.

\bibitem{DormandPrince1986}
J.~R. Dormand and P.~J. Prince.
\newblock {R}unge-{K}utta triples.
\newblock {\em Comp. Math. Appl.}, 12(9):1007--1017, 1986.

\bibitem{Evans1998}
L.~C. Evans.
\newblock {\em Partial differential equations}, volume~19 of {\em Graduate
  Studies in Mathematics}.
\newblock American Mathematical Society, 1998.

\bibitem{FanJinTeng2000}
H.~Fan, S.~Jin, and Z.-H. Teng.
\newblock Zero reaction limit for hyperbolic conservation laws with source
  terms.
\newblock {\em J. Diff. Equations}, 168(2):270--294, 2000.

\bibitem{FarjounSeibold2009_1}
Y.~Farjoun and B.~Seibold.
\newblock Solving one dimensional scalar conservation laws by particle
  management.
\newblock In M.~Griebel and M.~A. Schweitzer, editors, {\em Meshfree methods
  for Partial Differential Equations IV}, volume~65 of {\em Lecture Notes in
  Computational Science and Engineering}, pages 95--109. Springer, 2008.

\bibitem{FarjounSeibold2009_2}
Y.~Farjoun and B.~Seibold.
\newblock An exactly conservative particle method for one dimensional scalar
  conservation laws.
\newblock {\em J.~Comput.~Phys.}, 228(14):5298--5315, 2009.

\bibitem{FarjounSeibold2009_3}
Y.~Farjoun and B.~Seibold.
\newblock A rarefaction-tracking method for conservation laws.
\newblock special issue, 2009.
\newblock doi:10.1007/s10665-009-9338-3.

\bibitem{FickettDavis1979}
W.~Fickett and W.~C. Davis.
\newblock {\em Detonation}.
\newblock Univ.~of California Press, Berkeley, CA, 1979.

\bibitem{Godunov1959}
S.~K. Godunov.
\newblock A difference scheme for the numerical computation of a discontinuous
  solution of the hydrodynamic equations.
\newblock {\em Math. Sbornik}, 47:271--306, 1959.

\bibitem{HartenEngquistOsherChakravarthy1987}
A.~Harten, B.~Engquist, S.~Osher, and S.~Chakravarthy.
\newblock Uniformly high order accurate essentially non-oscillatory schemes.
  {III}.
\newblock {\em J.~Comput.~Phys.}, 71(2):231--303, 1987.

\bibitem{HelzelLevequeWarnecke2000}
C.~Helzel, R.~J. LeVeque, and G.~Warnecke.
\newblock A modified fractional step method for the accurate approximation of
  detonation waves.
\newblock {\em SIAM J. Sci. Comput.}, 22(4):1489--1510, 2000.

\bibitem{HertyKlar2003}
M.~Herty and A.~Klar.
\newblock Modelling, simulation and optimization of traffic flow networks.
\newblock {\em SIAM J. Sci. Comp.}, 25(3):1066--1087, 2003.

\bibitem{HoldenHoldenHeghKrohn1988}
H.~Holden, L.~Holden, and R.~Hegh-Krohn.
\newblock A numerical method for first order nonlinear scalar conservation laws
  in one dimension.
\newblock {\em Comput. Math. Appl.}, 15(6--8):595--602, 1988.

\bibitem{HoldenRisebro1995}
H.~Holden and N.~H. Risebro.
\newblock A mathematical model of traffic flow on a network of unidirectional
  roads.
\newblock {\em SIAM J. Math. Anal.}, 26(4):999--1017, 1995.

\bibitem{HoldenRisebro2002}
H.~Holden and N.~H. Risebro.
\newblock {\em Front Tracking for Hyperbolic Conservation Laws}.
\newblock Springer, 2002.

\bibitem{LaxWendroff1960}
P.D. Lax and B.~Wendroff.
\newblock Systems of conservation laws.
\newblock {\em Commun. Pure Appl. Math.}, 13:217--237, 1960.

\bibitem{LeVeque2002}
R.~J. Le~Veque.
\newblock {\em Finite volume methods for hyperbolic problems}.
\newblock Cambridge University Press, first edition, 2002.

\bibitem{LighthillWhitham1955}
M.~J. Lighthill and G.~B. Whitham.
\newblock On kinematic waves. {II}. {A} theory of traffic flow on long crowded
  roads.
\newblock volume 229 of {\em Proc.~Roy.~Soc.~A}, pages 317--345, Piccadilly,
  London, 1955.

\bibitem{LiuOsherChan1994}
X.-D. Liu, S.~Osher, and T.~Chan.
\newblock Weighted essentially non-oscillatory schemes.
\newblock {\em J.~Comput.~Phys.}, 115:200--212, 1994.

\bibitem{ShampineReichlet1997}
L.~Shampine and M.~W. Reichelt.
\newblock The {MATLAB {ODE}} suite.
\newblock {\em SIAM J. Sci. Comput.}, 18(1):1--22, 1997.

\bibitem{VanLeer1974}
B.~van Leer.
\newblock Towards the ultimate conservative difference scheme {II}.
  {M}onotonicity and conservation combined in a second order scheme.
\newblock {\em J.~Comput.~Phys.}, 14:361--370, 1974.

\end{thebibliography}
%=============================================================================================

\end{document}